\documentclass[12pt,reqno]{article}

\usepackage[usenames]{color}
\usepackage{amssymb}
\usepackage{graphicx}
\usepackage{amscd}

\usepackage[colorlinks=true,
linkcolor=webgreen,
filecolor=webbrown,
citecolor=webgreen]{hyperref}

\definecolor{webgreen}{rgb}{0,.5,0}
\definecolor{webbrown}{rgb}{.6,0,0}

\usepackage{color}
\usepackage{fullpage}
\usepackage{float}

\usepackage{graphics,amsmath,amssymb}
\usepackage{amsthm}
\usepackage{amsfonts}
\usepackage{latexsym}
\usepackage{epsf}

\setlength{\textwidth}{6.5in}
\setlength{\oddsidemargin}{.1in}
\setlength{\evensidemargin}{.1in}
\setlength{\topmargin}{-.5in}
\setlength{\textheight}{8.9in}

\newcommand{\seqnum}[1]{\href{http://www.research.att.com/cgi-bin/access.cgi/as/~njas/sequences/eisA.cgi?Anum=#1}{\underline{#1}}}

\newtheorem{theorem}{Theorem}

\newtheorem{lemma}[theorem]{Lemma}
\newtheorem{corollary}{Corollary}

\newtheorem{conjecture}{Conjecture}

\long\def\symbolfootnote[#1]#2{\begingroup
\def\thefootnote{\fnsymbol{footnote}}\footnote[#1]{#2}\endgroup}

\newcommand{\sg}{\sigma}

\def\W{\mathcal{W}}
\def\S{\mathcal{S}}
\def\A{\mathcal{A}}

\def\F{\mathcal{F}}
\def\P{\mathbb{P}}


\title{Generating functions for Wilf equivalence under generalized factor order}

\author{
Thomas Langley  \\
\small Department of Mathematics \\[-0.8ex]
\small Rose-Hulman Institute of Technology \\[-0.8ex]
\small Terre Haute, IN 47803 USA\\[-0.8ex]
\small \texttt{langley@rose-hulman.edu}
\and
Jeffrey Liese \\
\small Department of Mathematics\\[-0.8ex]
\small California Polytechnic State University\\[-0.8ex]
\small San Luis Obispo, CA 93407-0403. USA\\[-0.8ex]
\small \texttt{jliese@calpoly.edu} \and
Jeffrey Remmel \\
\small Department of Mathematics\\[-0.8ex]
\small University of California, San Diego\\[-0.8ex]
\small La Jolla, CA 92093-0112. USA\\[-0.8ex]
\small \texttt{remmel@math.ucsd.edu} }

\date{\small 05.24.10\\
\small \mbox{}\\
\small MR Subject Classifications: 05A15, 68R15, 06A07\\
\small Keywords: composition, factor orders,
generating function, partially ordered set, rationality, Wilf equivalence
}

\begin{document}
\maketitle

\begin{abstract}

Kitaev, Liese, Remmel, and Sagan recently defined generalized factor order on words 
comprised of letters from a partially ordered set $(P, \leq_P)$  by setting $u \leq_P w$
if there is a subword $v$ of $w$ of the same length as $u$ 
such that the $i$-th character of $v$ is
greater than or equal to the $i$-th character of $u$ for all $i$.   This subword $v$ is called an 
embedding of $u$ into $w$.
For the case where $P$ is the positive integers with the usual ordering, they defined the 
weight of a word $w = w_1\ldots w_n$ to be $\text{wt}(w) = x^{\sum_{i=1}^n w_i} t^{n}$, 
and the corresponding weight generating function $F(u;t,x) = \sum_{w \geq_P u}  \text{wt}(w)$. 
They then defined  two words $u$ and $v$ to be Wilf equivalent, denoted $u \backsim v$, 
 if and only if $F(u;t,x) = F(v;t,x)$.  They also defined the related
generating function $S(u;t,x) = \sum_{w \in \mathcal{S}(u)} \text{wt}(w)$ where $\mathcal{S}(u)$
is the set of all words $w$ such that the only embedding of $u$ into $w$ is a suffix of $w$, and 
showed that $u \backsim v$ if and only if $S(u;t,x) = S(v;t,x)$.  
We continue this study by giving an explicit formula for $S(u;t,x)$ if $u$ factors into a weakly increasing 
word followed by a weakly decreasing word.  We use this formula as an aid to classify Wilf equivalence 
for all words of length 3.  We also show that coefficients of related generating functions are 
well-known sequences in several special cases.  
Finally, we discuss a conjecture that if $u \backsim v$ then $u$ and $v$ must be rearrangements, and the 
stronger conjecture that there also must be a
 weight-preserving bijection $f: \mathcal{S}(u) \rightarrow \mathcal{S}(v)$ 
such that $f(u)$ is a rearrangement of $u$ for all $u$.

\end{abstract}

\section{Introduction and definitions} \label{sec:intro}

Kitaev, Liese, Remmel, and Sagan \cite{KLRS} recently introduced the 
generalized factor order on words comprised of letters from a partially ordered set (poset). That is, 
let  $\mathcal{P} =(P, \leq_P)$ be a poset and let $P^*$ be the Kleene closure of $P$ so that 
$$
P^* = \{w = w_1 w_2 \ldots w_n \: | \: n \geq 0 \text{ and } w_i \in P \text{ for all } i \}.
$$
For $w \in P^*$, let $|w|$ denote 
the number of characters in $w$.  Then for any $u, w \in P^*$, $u$ is less than or equal to $w$ in the 
\textit{generalized factor order} relative to $\mathcal{P}$, written 
$u \leq_{\mathcal{P}} w$, if
there is a string $v$ of $|u|$ consecutive characters in $w$ such that 
the $i$-th character of $v$ is greater than or equal to the $i$-th
 character of 
$u$ under $\leq_\mathcal{P}$ for each $i$, $1 \leq i \leq |u|$.  If $u \leq_{\mathcal{P}} w$,  we will also say that $w$ \textit{embeds} $u$, and that  $v$ is an \textit{embedding of} $u$ \textit{into}  $w$.
We will primarily be interested in the poset $\mathcal{P}_1  = (\P, \leq)$, where $\P$ is the set of positive integers and 
$\leq$ is the usual total order on $\P$.  In this case, for example,  $u = 321 \leq_{\mathcal{P}_1} w = 142322$,
and 423 and 322 are embeddings of $u$ into $w$.  Kitaev, Liese, Remmel, and Sagan \cite{KLRS} 
noted that generalized factor order is related to 
generalized subword order, in which the characters of  $v$ are not required to be adjacent \cite{SV}.

Kitaev, Liese, Remmel, and Sagan \cite{KLRS}  defined Wilf equivalence under 
the generalized factor order on the positive integers in the following way.  For 
$w = w_1 \ldots w_n \in \P^*$,  let 
$\Sigma(w) = \sum_{i=1}^n w_i$ and define the \emph{weight} of $w$ to be
$\text{wt}(w) = t^{n} x^{\Sigma(w)}$.  Then define 
$$
\F(u) = \{w \in \P^* \: | \: u \leq_{\mathcal{P}_1} w \},
$$
and the related generating function 
$$
F(u;t,x) =
\sum_{w \in \F(u)} \text{wt}(w).
$$
Two words $u, v \in \P^*$ are then said to 
be \emph{Wilf equivalent}, denoted $u \backsim v$, 
 if and only if
$F(u;t,x) = F(v;t,x)$.  Kitaev, Liese, Remmel, and Sagan \cite{KLRS} noted that this idea, while inspired by 
the notion of Wilf equivalence used in the theory of pattern avoidance,  is different, since the 
partial order in question is not that of pattern containment.  More information about Wilf 
equivalence in the pattern avoidance context is contained in the survey article by Wilf \cite{Wilf}.

In proving results about Wilf equivalence, it is often convenient to study the sets
\begin{eqnarray*}
\S(u) & = & \{w \in \P^* \: | \: u \leq_{\mathcal{P}_1} w \text{ and the last } |u|  \text{ characters of } w \text{ is the only } \\
&& \hspace{.62in} \text{ embedding of } u \text{ into } w \},  \\
\W(u) & = & \{w \in \P^* \: | \: u \leq_{\mathcal{P}_1} w \text{ and } |w|=|u| \},\text{ and}\\
\A(u) & = & \{w \in \P^* \: | \: u \not \leq_{\mathcal{P}_1} w \}
\end{eqnarray*}
and the corresponding weight generating functions
\begin{eqnarray*}\label{basicgf}
S(u;t,x) &=&  \sum_{w \in \S(u)} \text{wt}(w), \\
W(u;t,x) &=&  \sum_{w \in \W(u)} \text{wt}(w),  \text{ and } \\
A(u;t,x) &=&  \sum_{w \in \A(u)} \text{wt}(w). 
\end{eqnarray*}

Kitaev, Liese, Remmel, and Sagan \cite{KLRS} proved that $F(u;t,x)$, $S(u;t,x)$, 
and $A(u;t,x)$ are rational.  They constructed a non-deterministic finite 
automaton for each $u \in \P^*$ that recognizes $\S(u)$, 
implying that $S(u;t,x)$ is rational.  That the others are rational follows 
from the fact that the weight generating function for all words in $\P^*$ is
\begin{eqnarray*}
\sum_{w \in \P^*} \text{wt}(w) & = & \frac{1}{1-\sum_{n \geq 1} tx^n} \\
& = & \frac{1}{1 - tx/(1-x) } \\
& = & \frac{1-x}{1-x-tx}, 
\end{eqnarray*}
and therefore 
\begin{equation}\label{eq:FS}
F(u;t,x) = S(u;t,x) \frac{1-x}{1-x - tx} 
\end{equation}
and 
$$
F(u;t,x) = \frac{1-x}{1-x-tx} - A(u;t,x).
$$
We also note that $W(u;t,x)$ is 
rational since
$$
W(u;t,x) = \frac{t^{|u|}x^{\Sigma(u)}}{(1-x)^{|u|}}.
$$

From (\ref{eq:FS}), we have that  $F(u;t,x) = F(v;t,x)$ if and only if $S(u;t,x) = S(v;t,x)$, 
and therefore $u \backsim v$ if and only if $S(u;t,x) = S(v;t,x)$.  Much of our 
work will be centered around computing explicit formulas for $S(u;t,x)$ for certain 
words $u$.  In particular, 
Kitaev, Liese, Remmel and Sagan \cite{KLRS} gave two examples 
of classes of words $u$ such that $S(u;t,x)$ has a
simple form.  That is, 
they proved that if $u = 1~2~3 \ldots n-1~n$ or $u =1^kb^\ell$ for 
some $k \geq 0$, $\ell \geq 1$, and $b \geq 2$, then 
$S(u;t,x) = \frac{x^s t^r}{P(u;t,x)}$ for some polynomial $P(u;t,x)$, and 
produced an explicit expression for $P(u;t,x)$ in each case. 
We shall show that there is a much richer class of  
of words $u$ such that $S(u;t,x)$ has this same form. Specifically, 
for any word $u$, let 
$u_{inc}$ be the longest weakly increasing prefix of $u$.  If 
$u = u_{inc}v$ and $v$ is weakly decreasing, then we shall 
say that $u$ has an \textit{increasing/decreasing factorization} and 
denote $v$ as $u_{dec}$.  Thus if $u = u_1 u_2 \ldots u_n$ 
has an increasing/decreasing factorization, then either 
$u_1 \leq \cdots \leq u_n$, in which case $u_{inc}= u$ and 
$u_{dec}$ is the empty string $\varepsilon$, or 
there is a  $k <n$ such that $u_1 \leq \cdots \leq u_k > u_{k+1} \geq \cdots \geq 
u_n$, in which case $u_{inc} = u_1 \ldots u_k$ and $u_{dec} = 
u_{k+1} \ldots u_n$.  For the theorem that follows, we define 
$$
D^{(i)}(u) = \{n-i+j: 1 \leq j \leq i \text{ and } u_j > u_{n-i+j}\}
$$
and $d_i(u) = \sum_{n-i+j \in D^{(i)}(u)} (u_j - u_{n-i+j})$.
For example, if $u = 1~2~3~4~4~3~1~1$ and $i=5$, then 
by considering the diagram 
$$
\begin{array}{cccccccc}
1 & 2 & 3 & 4 & 4 & 3 & 1 & 1 \\
   &    &    & 1 & 2 & 3 & \underline{4} & \underline{4}    \end{array}
$$ 
we see that 
$D^{(5)}(u) = \{7,8\}$ and $d_5(u) = (4-1) + (4-1) = 6$. 
One of our main results is the following theorem.


\begin{theorem}\label{thm:S}
Let $u=u_1 u_2 \ldots u_n \in \P^*$ have an increasing/decreasing factorization. For $1 \leq i \leq n-1$, let 
$s_i = u_{i+1} u_{i+2} \ldots u_n$ and $d_i = d_i(u)$.  Also let  $s_{n} = \varepsilon$ and 
$d_n = 0$.  
Then 
$$
S(u;t,x) = \frac{t^{n} x^{\Sigma(u)}}{t^{n} x^{\Sigma{(u)}}+(1-x-tx)\sum_{i=1}^{n} t^{n-i} x^{d_i+\Sigma(s_i)}(1-x)^{i-1}}.
$$
\end{theorem}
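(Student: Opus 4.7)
The plan is to apply the Goulden--Jackson cluster method, suitably weighted for the generalized factor order. I would introduce the \emph{signed cluster generating function}
$$C(u;t,x)=\sum_{k\geq 1}(-1)^k C_k(u;t,x),$$
where $C_k$ is the weighted enumeration of $k$ overlapping embeddings of $u$ placed at positions $p_1<p_2<\cdots<p_k$ with $p_{j+1}-p_j\leq n-1$, each position $q$ of the resulting block contributing the factor $tx^{M_q}/(1-x)$ and $M_q$ being the maximum lower bound imposed on $w_q$ by the embeddings covering $q$. The standard cluster decomposition of an arbitrary word into alternating free characters and cluster blocks gives $A(u;t,x)=(1-x)/\bigl(1-x-tx-(1-x)\,C(u;t,x)\bigr)$, and combining this with~(\ref{eq:FS}) yields
$$S(u;t,x)=\frac{-(1-x)\,C(u;t,x)}{1-x-tx-(1-x)\,C(u;t,x)}.$$

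The heart of the argument is the following \emph{extension lemma}, which is the main obstacle: when $u$ has an increasing/decreasing factorization, adjoining to an existing cluster a new embedding that overlaps the immediately preceding one in exactly $i$ positions multiplies the cluster weight by
$$\Delta_i:=\frac{t^{n-i}x^{d_i+\Sigma(s_i)}}{(1-x)^{n-i}},$$
independently of the overlap pattern earlier in the cluster. To prove it, I would separately account for the $i$ overlap positions $q_j=p_{r+1}+j-1$ ($j=1,\ldots,i$) and the $n-i$ freshly covered positions $p_r+n,\ldots,p_{r+1}+n-1$. At an overlap position $q_j$ the new embedding contributes the lower bound $u_j$, the previous embedding contributes $u_{n-i+j}$, and any earlier embedding $l<r$ covering $q_j$ contributes $u_b$ with $b=n-i+j+(p_r-p_l)>n-i+j$. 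If $u_j\leq u_{n-i+j}$ the maximum bound at $q_j$ is unchanged, matching the zero contribution to $d_i$. If $u_j>u_{n-i+j}$ then the inc/dec structure $u_1\leq\cdots\leq u_k>u_{k+1}\geq\cdots\geq u_n$ forces $n-i+j\geq k+1$---otherwise both $j\leq n-i+j\leq k$ lie in the nondecreasing prefix and $u_j\leq u_{n-i+j}$, a contradiction---and since $u$ is weakly decreasing from position $k$ onward, every $u_b$ with $b>n-i+j$ satisfies $u_b\leq u_{n-i+j}$. Hence the maximum at $q_j$ prior to attaching the new embedding is exactly $u_{n-i+j}$, and attaching it raises the bound by $u_j-u_{n-i+j}$. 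Summing over the overlap positions yields the $x^{d_i}$ factor, while the freshly covered positions lie outside every earlier embedding's range and contribute $t^{n-i}x^{\Sigma(s_i)}/(1-x)^{n-i}$; combined they produce $\Delta_i$.

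Given the extension lemma, $C_k(u;t,x)$ factors as $W(u;t,x)\,E(u;t,x)^{k-1}$, where $W(u;t,x)=t^nx^{\Sigma(u)}/(1-x)^n$ is a single embedding's weight and $E(u;t,x)=\sum_{i=1}^{n-1}\Delta_i$, so the cluster generating function collapses to a geometric series
$$C(u;t,x)=\sum_{k\geq 1}(-1)^k W(u;t,x)\,E(u;t,x)^{k-1}=\frac{-W(u;t,x)}{1+E(u;t,x)}.$$
Substituting into the expression for $S(u;t,x)$ and using the identity
$$(1-x)^{n-1}\bigl(1+E(u;t,x)\bigr)=\sum_{i=1}^{n}t^{n-i}\,x^{d_i+\Sigma(s_i)}\,(1-x)^{i-1}$$
---where the $i=n$ summand is $(1-x)^{n-1}$ since $d_n=\Sigma(s_n)=0$, absorbing the leading $1$ in $1+E$---yields the stated closed form for $S(u;t,x)$.
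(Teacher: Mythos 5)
Your proposal is correct, and it reaches the theorem by a genuinely different route. The paper works directly with the set identity $\S(u)=\A(u)\W(u)-\bigcup_{i=1}^{n-1}\S^{(i)}(u)$, where $\S^{(i)}(u)$ conditions on the leftmost embedding overlapping the final one in $i$ positions, and its key lemma is a weight-preserving bijection between $\bar{\S}^{(i)}(u)$ and $\S(u)$ obtained by decrementing the letters at the positions indexed by $D^{(i)}(u)$; the increasing/decreasing hypothesis is used to show the inverse map (incrementing those letters) cannot create a new embedding further to the left. You instead run a weighted Goulden--Jackson cluster argument, and your extension lemma plays the role of the paper's Lemma~\ref{lem:S}: the crucial computation in both is the same structural fact, namely that $u_j>u_{n-i+j}$ forces $n-i+j$ into the weakly decreasing tail, so every $u_b$ with $b>n-i+j$ satisfies $u_b\leq u_{n-i+j}$ --- in the paper this guarantees no new leftward embedding appears, in your version it guarantees the pre-existing maximum bound at an overlap position is exactly $u_{n-i+j}$, so the multiplier $\Delta_i$ is independent of the earlier overlap pattern and $C_k=W\cdot E^{k-1}$ telescopes. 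Your route buys a cleaner algebraic endgame (a geometric series rather than solving a linear equation for $S$) at the cost of having to justify the cluster formalism for dominance-type occurrences (the inclusion--exclusion over marked embeddings and the unique free-letter/cluster decomposition), which you assert as standard; that adaptation is indeed routine, but it deserves a sentence or two since the ``pattern'' here is an infinite family of words rather than a single factor. The only other nit is that your case analysis at overlap positions should explicitly note that when $u$ is entirely weakly increasing ($u_{dec}=\varepsilon$) the case $u_j>u_{n-i+j}$ never occurs, which is consistent with $d_i=0$ there; with that observed, the argument is complete and the final identity $(1-x)^{n-1}\bigl(1+E(u;t,x)\bigr)=\sum_{i=1}^{n}t^{n-i}x^{d_i+\Sigma(s_i)}(1-x)^{i-1}$ correctly absorbs the $i=n$ summand and reproduces the stated formula.
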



Since the words $u = 1~2~3 \ldots n-1~n$ or $u =1^kb^\ell$ for 
some $k \geq 0$, $\ell \geq 1$, and $b \geq 2$ clearly have increasing/decreasing factorizations, 
Theorem \ref{thm:S} covers both of the cases 
proved by Kitaev, Liese, Remmel and Sagan \cite{KLRS}. 

Theorem~\ref{thm:S} will lead us to the other main results in our work.  First,
we will use Theorem \ref{thm:S}, as well as a slight modification in a special case,  to 
 completely 
classify the Wilf equivalence classes of $\mathcal{P}_1$ for all 
words of length 3.  We will also compute $S(u;t,x)$, along with $F(u;t,x)$ and $A(u;t,x)$,
for some simple words and show that the coefficients in these generating functions
are often well-known sequences.  
Next, 
Theorem \ref{thm:S} will allow us  
to show that if $u$ and $v$ are words with
increasing/decreasing factorizations, then $u \backsim v$ if and only 
if $v$ is a rearrangement of the letters of $u$.  This shows
that words with increasing/decreasing factorizations satisfy 
the following conjecture of Kitaev, Liese, Remmel, and Sagan \cite{KLRS}.

\begin{conjecture}[Kitaev, Liese, Remmel, Sagan]\label{con:weak}
If $u \backsim v$, then $v$ is a rearrangement of $u$. 
 \end{conjecture}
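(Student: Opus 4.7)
The plan is to use the explicit formula from Theorem \ref{thm:S} to extract invariants of $u$ from $S(u;t,x)$ that together pin down the multiset of letters of $u$ when $u$ (and $v$) has an increasing/decreasing factorization. Since $u \backsim v$ iff $S(u;t,x) = S(v;t,x)$, and both are given by the rational expression of Theorem \ref{thm:S}, equating them reduces the problem to a polynomial identity in $t$ and $x$ that can be dissected coefficient by coefficient.

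The first step is to recover the basic invariants $n = |u|$ and $\Sigma(u)$ from the lowest-degree monomial $t^n x^{\Sigma(u)}$ of $S(u;t,x)$, which comes from the minimum-length word $u$ itself in $\S(u)$. Setting $n = |u| = |v|$ and equating the Theorem \ref{thm:S} formulas forces
\[
\sum_{i=1}^{n} t^{n-i} x^{d_i(u)+\Sigma(s_i(u))}(1-x)^{i-1}
\;=\;
\sum_{i=1}^{n} t^{n-i} x^{d_i(v)+\Sigma(s_i(v))}(1-x)^{i-1}.
\]
Matching the coefficient of each $t^{n-i}$ and canceling the unit $(1-x)^{i-1}$ in $\Z[[x]]$ yields, for every $1 \leq i \leq n$, the scalar equality $d_i(u)+\Sigma(s_i(u)) = d_i(v)+\Sigma(s_i(v))$.

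The main combinatorial step, and where I expect the real work, is proving the identity
\[
d_i(u) + \Sigma(s_i(u)) \;=\; \text{sum of the } n-i \text{ largest letters of } u
\]
for any word $u$ with an increasing/decreasing factorization. Two endpoint cases support this: for $i = n-1$, a telescoping on the decreasing part gives $d_{n-1}(u) = u_k - u_n$ (where $k$ is the pivot), and adding $\Sigma(s_{n-1}(u)) = u_n$ returns the peak letter $u_k$, which is indeed the maximum of the multiset; for $i = 1$, the sum equals $\Sigma(u)$ minus the minimum letter. The general case should follow by interpreting $d_i(u) = \sum_{j: u_j > u_{n-i+j}} (u_j - u_{n-i+j})$ as the correction that replaces the suffix $u_{n-i+1}\cdots u_n$ by the componentwise maximum with the prefix $u_1 \cdots u_i$, which, for a unimodal $u$, picks out exactly the $n-i$ largest entries of the whole multiset.

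Once this identity is in hand, the hypothesis becomes: for every $i$, the sum of the top $n-i$ letters of $u$ equals the sum of the top $n-i$ letters of $v$. Taking successive differences recovers each sorted letter individually, so the multisets agree and $v$ is a rearrangement of $u$, establishing Conjecture \ref{con:weak} in the increasing/decreasing setting. The principal obstacle is the combinatorial identity for $d_i(u)+\Sigma(s_i(u))$: the set $D^{(i)}(u)$ depends in a nontrivial way on where the pivot $k$ sits relative to the overlap window, so I expect a case analysis split by whether $n-i+1 \leq k$, $n-i+1 > k$, or the window straddles $k$, together with careful bookkeeping of which comparisons $u_j > u_{n-i+j}$ are strict along the weakly increasing and weakly decreasing runs.
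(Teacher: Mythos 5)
Your plan is sound, but be clear about its scope: the statement you were asked to prove is an \emph{open conjecture} in this paper, asserted for arbitrary $u,v\in\P^*$, and what you have outlined (as you yourself note at the end) only establishes it when both words have increasing/decreasing factorizations. That special case is exactly what the paper proves, as Theorem~\ref{thm:rearrangements}, and your route to it is essentially the paper's route: extract $|u|$ and $\Sigma(u)$ from the minimal monomial, equate the denominators from Theorem~\ref{thm:S}, and match coefficients of $t^{n-i}$ to get $d_i(u)+\Sigma(s_i(u))=d_i(v)+\Sigma(s_i(v))$ for all $i$. The one step you defer as ``the real work'' --- that $d_i(u)+\Sigma(s_i(u))$ equals the sum of the $n-i$ largest letters of $u$ for unimodal $u$ --- is precisely the content of the paper's Lemma~\ref{lem:rearrangements}, which proves it by comparing an arbitrary unimodal rearrangement against the sorted word via a four-case analysis (the cases indexed by where $v_i$ falls among the blocks $A^i,B^i,C^i,D^i$), i.e., the same kind of window/pivot case split you anticipate. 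The only organizational difference is that the paper first uses the forward implication to replace $u$ and $v$ by their nondecreasing rearrangements, where $d_i=0$ and the identity is trivial, whereas you propose proving the identity directly for all unimodal words; the combinatorial effort is the same either way. Your identity is correct (e.g., for $u=12344311$ and $i=5$ one gets $d_5+\Sigma(s_5)=6+5=11=4+4+3$), and the successive-differences step at the end is fine. What neither you nor the paper can do with these tools is the general case: Theorem~\ref{thm:S} simply does not apply to words without increasing/decreasing factorizations, and the paper's example of $24153\not\backsim 24315$, which share all values of $d_i+\Sigma(s_i)$, shows these invariants cannot drive the argument in general. So the proposal should be presented as a proof of Theorem~\ref{thm:rearrangements}, not of Conjecture~\ref{con:weak}.
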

 
We shall call this conjecture the {\em weak rearrangement conjecture}. 
In fact, we conjecture something much stronger is true. 
 
\begin{conjecture} \label{con:strong}
If $u \backsim v$, then there is a weight preserving bijection 
$f:\P^* \rightarrow \P^*$ such that for all $w \in \P^*$, 
$f(w)$ is a rearrangement of $w$ and 
$w \in \F(u) \iff f(w) \in \F(v)$. 
\end{conjecture}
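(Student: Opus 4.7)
The plan is to build $f$ from a bijection on the core sets $\S(u)$ and $\S(v)$. Observe that the identity $F(u;t,x) = S(u;t,x)\cdot(1-x)/(1-x-tx)$ reflects a combinatorial bijection $\F(u) \leftrightarrow \S(u) \times \P^*$: each $w \in \F(u)$ splits uniquely as $w = p\cdot q$ where $p$ is the shortest prefix of $w$ whose suffix is an embedding of $u$ (so $p \in \S(u)$) and $q$ is the remaining characters. Consequently, if I can construct a weight-preserving bijection $g\colon \S(u) \to \S(v)$ such that $g(p)$ is always a rearrangement of $p$, then $f(w) := g(p)\cdot q$ defines a rearrangement-preserving bijection $\F(u) \to \F(v)$. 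Since such an $f$ restricts to a bijection on each multiset stratum, counting forces $|\A(u) \cap M| = |\A(v) \cap M|$ for every multiset $M$, and we may then extend $f$ arbitrarily by any multiset-preserving matching $\A(u) \to \A(v)$ to obtain the required global $f$ on $\P^*$.

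To construct $g$ when $u$ admits an increasing/decreasing factorization, I would use Theorem~\ref{thm:S} as a blueprint. Its denominator suggests a stratification $\S(u) = \bigsqcup_{i=1}^{n} \S_i(u)$, where $\S_i(u)$ collects those $w \in \S(u)$ whose terminal embedding of $u$ overlaps a preceding near-embedding at the positions recorded by $D^{(i)}(u)$; the term $t^{n-i}x^{d_i+\Sigma(s_i)}(1-x)^{i-1}$ in the denominator counts exactly these clusters with a prescribed $u$-avoiding prefix. If $v$ is another increasing/decreasing word with $u \backsim v$, Theorem~\ref{thm:S} combined with the rational independence of the contributing terms should force the multiset of pairs $\{(d_i(u),s_i(u))\}$ to match $\{(d_i(v),s_i(v))\}$ via some pairing $\pi$, and I would then build $g$ stratum-by-stratum as $\S_i(u) \to \S_{\pi(i)}(v)$ by first rearranging the fixed cluster tail through a direct bijection between embeddings of $u$ and $v$ with the same multiset, and then recursing on the free prefix.

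The main obstacle is this recursion on prefixes: the free prefix lives in $\A(u)$, but its rearranged image must land in $\A(v)$, so the construction of $g$ presupposes a rearrangement-preserving bijection $\A(u) \to \A(v)$ --- essentially the conjecture again, at one level down. I would try to break this circularity by induction on the length of $w$: the free prefix is strictly shorter than $w$, so the inductive hypothesis can apply provided the bijection from the inductive step is compatible with the cluster data at the next layer. A cleaner alternative would be to look for a direct local-move bijection --- adjacent-letter swaps that simultaneously destroy an embedding of $u$ and create one of $v$ --- and to generate $f$ as the closure of these moves; verifying that such moves suffice to connect all rearrangements in the correct way is itself a nontrivial combinatorial problem.

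For the general case (no increasing/decreasing factorization), Theorem~\ref{thm:S} is unavailable and the hardest step will be to replicate the stratification. I would first attempt Conjecture~\ref{con:weak} in full generality via a refined Goulden--Jackson cluster computation using a multivariate weight $\prod_i x_{w_i}$ in place of $x^{\Sigma(w)}$, which would upgrade $u \backsim v$ to the assertion that $u$ and $v$ are rearrangements. Given the weak conjecture, I would then port the stratified construction to the non-deterministic finite automaton recognizing $\S(u)$ from \cite{KLRS}, matching states of the automata for $\S(u)$ and $\S(v)$ and lifting the state matching to a rearrangement-preserving bijection on accepted words. Whether such an explicit state matching exists in full generality is the real crux of Conjecture~\ref{con:strong}, and I expect its resolution to require a combinatorial insight beyond the tools developed in the present paper.
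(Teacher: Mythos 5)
The statement you are trying to prove is Conjecture~\ref{con:strong}, which the paper itself leaves open: no proof is given there, only supporting evidence (the reduction in Theorem~\ref{thm:lift} to the finite alphabets $[m]$, the observation that all equivalences in \cite{KLRS} were witnessed by rearrangement maps, and machine verification of $S(u;x_1,\ldots,x_5)$ for permutations in $S_n$, $n\le 5$). Your attempt is likewise not a proof but a program, and you say so yourself at each critical juncture. The one fully sound piece is the opening reduction: since $\F(u)=\S(u)\P^*$ via the unique factorization $w=pq$ with $p$ the shortest prefix of $w$ ending in an embedding of $u$, a weight-preserving rearrangement bijection $g:\S(u)\to\S(v)$ would induce one on $\F(u)\to\F(v)$, force $|\A(u)\cap M|=|\A(v)\cap M|$ for every multiset $M$, and hence extend to all of $\P^*$. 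That reduction is correct, but it relocates the entire difficulty into constructing $g$, and everything you offer toward that construction has a concrete hole.

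Specifically: (1) your proposed stratification $\S(u)=\bigsqcup_i \S_i(u)$ does not match what Theorem~\ref{thm:S} actually encodes --- the paper's decomposition is $\S(u)=\A(u)\W(u)-\bigcup_i\S^{(i)}(u)$, a set difference whose generating-function consequence is an equation that must be \emph{solved} for $S(u;t,x)$; the denominator terms are artifacts of that algebra, not cardinalities of disjoint blocks of $\S(u)$, so there is no direct stratum-by-stratum bijection to build. (2) Your recursion on the $u$-avoiding prefix presupposes a rearrangement bijection $\A(u)\to\A(v)$ compatible with cluster data, which, as you note, is the conjecture one level down; induction on length does not obviously close this loop because the inductive hypothesis gives you \emph{some} bijection on shorter words, not one respecting the overlap structure you need at the boundary. (3) For general $u$ you invoke the weak conjecture (Conjecture~\ref{con:weak}), itself open, and then an unproved ``state matching'' of automata. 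None of these steps is carried out, so there is no proof here --- which is consistent with the status of the statement in the paper. If you want to make genuine progress along these lines, the most promising concrete target suggested by the paper is Theorem~\ref{thm:lift}: it reduces the strong conjecture for a given pair $u,v\in[m]^*$ to verifying $S(u;x_1,\ldots,x_m)=S(v;x_1,\ldots,x_m)$, a finite-alphabet identity of rational functions, and that is where an actual argument (even for restricted classes such as words with increasing/decreasing factorizations) would have to live.
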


 We will call such a bijection $f$ 
a \emph{rearrangement map} that \emph{witnesses} $u \backsim v$ and 
refer to this conjecture as the {\em strong rearrangement conjecture}. All the Wilf equivalences proved by Kitaev, Liese, Remmel, and Sagan in  Section 4 of \cite{KLRS}
were proved by a constructing a rearrangement map that witnessed the given 
Wilf equivalence.  

We investigate the rearrangement conjectures by considering the class of finite posets  $\mathcal{P}_{[m]} = ([m],\leq)$, where $[m] = \{1, \ldots, m\}$ and 
$\leq$  is the usual total order on $\mathbb{P}$.
For any word $w \in [m]^*$ and $i \in [m]$, let $c_i(w)$ equal the number of occurrences of $i$ in $w$.  
 Then we introduce variables $x_1, x_2, \ldots, x_m$, and define the weight of $w$, $W_{[m]}(w)$, to be $W_{[m]}(w) = \prod_{i=1}^m x_i^{c_i(w)}$.  To define Wilf equivalence
 in this context, we set
\begin{eqnarray*}
F(u; x_1, \ldots , x_m) & = & \sum_{w \in \F(u) \cap [m]^*} W_{[m]}(w), 
\end{eqnarray*}
and define $u, v \in [m]^*$ to be Wilf equivalent with respect to the poset $\mathcal{P}_{[m]}$, denoted $u \backsim_{[m]} v$, if and only $F(u; x_1, \ldots , x_m) = F(v; x_1, \ldots , x_m)$.  
We will also have use for the related generating functions
\begin{eqnarray*}
W(u; x_1, \ldots , x_m) & = & \sum_{w \in \W(u) \cap [m]^*} W_{[m]}(w), \\
S(u; x_1, \ldots , x_m) & = & \sum_{w \in \S(u)\cap [m]^*} W_{[m]}(w), \ \mbox{and}  \\
A(u; x_1, \ldots , x_m) & = & \sum_{w \in \A(u)\cap [m]^*} W_{[m]}(w).
\end{eqnarray*}
Note that we have dropped the $t$ dependence in these generating functions 
since length is recorded by the
number of variables in a monomial.  We now have
$$
\sum_{w \in [m]^*} W_{[m]}(w) = \frac{1}{1 - \sum_{i=1}^m x_i}.
$$
Thus since $\F(u)\cap [m]^* = (\S(u)\cap [m]^*)[m]^*$ and 
$A(u) \cap [m]^* = [m]^* - (\F(u) \cap [m]^*)$, we have that 
\begin{eqnarray*}
F(u; x_1, \ldots , x_m) &=& S(u; x_1, \ldots , x_m)\frac{1}{1 - \sum_{i=1}^m x_i} \ \mbox{and} \\
A(u; x_1, \ldots , x_m) &=& \frac{1}{1 - \sum_{i=1}^m x_i} - F(u; x_1, \ldots , x_m)
\end{eqnarray*} 
so that if any one of $F(u; x_1, \ldots , x_m)$, 
$A(u; x_1, \ldots , x_m)$, or $S(u; x_1, \ldots , x_m)$ is rational, 
then so are the other two. It follows from Theorem 8.2 of 
\cite{KLRS} that $S(u; x_1, \ldots , x_m)$ is rational for 
all $m \geq 1$, so  $F(u; x_1, \ldots , x_m)$ and 
$A(u; x_1, \ldots , x_m)$ 
are also rational for all $m \geq 1$. Also note that if 
$u =u_1 \ldots u_n$, then 
$$
W(u;x_1, \ldots ,x_m) = \prod_{r=1}^n \sum_{s=u_r}^m x_s,
$$
so $W(u;x_1, \ldots ,x_m)$ is rational.   We will show 
that if $u \backsim_{[m]} v $ for some $m$, then there is a 
rearrangement map that witnesses the equivalence $u \backsim v$.  This gives us 
a way to test the strong rearrangement conjecture for any 
particular pair of words $u,v \in \P^*$.  We will also give an analogue
of Theorem~\ref{thm:S} for these posets.

The outline of this paper is as follows. In Section~\ref{sec:S}, 
we prove Theorem \ref{thm:S}  and show that the weak  rearrangement conjecture
holds for words with increasing/decreasing factorizations. In Section~\ref{sec:sequences},
we
compute  $F(u;t,x)$, $S(u;t,x)$ and $A(u;t,x)$ for some simple words. In particular, 
we show that the sequences of coefficients that arise in the expansions around $x = 0$ of 
$F(k;1,x)$, $S(k;1,x)$, $A(k;1,x)$, $S(1k1;1,x)$ and $A(1k1;1,x)$ as $k$ varies 
have appeared in 
the On-line Encyclopedia of Integer Sequences (OEIS).
We
follow this with the classification of the Wilf equivalence classes 
of words of length 3 in Section~\ref{sec:3}. The results 
of Sections \ref{sec:S} and \ref{sec:3} allow us to  compute 
$S(\sigma;t,x)$ and $A(\sigma;t,x)$ for all permutations 
in the symmetric group $S_3$ as there are only two Wilf equivalences 
classes for such permutations. In these cases,   
the coefficients that arise in 
the expansions of $S(\sigma;1,x)$ and $A(\sigma;1,x)$ 
around $x=0$ do not correspond to any sequences that have appeared 
in the OEIS.  We discuss the strong rearrangement conjecture in 
Section~\ref{sec:rearrangement}, 
as well as the analogue of Theorem~\ref{thm:S} for the posets $\mathcal{P}_{[m]}$.
We conclude with a few remarks about further work in Section~\ref{sec:conclusion}.


\section{Words such that $S(u;t,x) = \frac{x^st^r}{P(u;t,x)}$ where $P(u;t,x)$ 
is a polynomial.} \label{sec:S}
In this section we prove Theorem \ref{thm:S}, and show that Conjecture~\ref{con:weak} holds for words with an increasing/decreasing factorization. \\

\noindent \textit{Proof of Theorem~\ref{thm:S}.}
Let $u=u_1 u_2 \ldots u_n \in \P^*$ have an increasing/decreasing factorization. If $w = w_1 \ldots w_m \in \mathcal{S}(u)$, then $w_1 \ldots w_{m-n} \in \A(u)$ and $u \leq 
w_{m-n+1} \ldots w_{m}$. However if 
$v \in \A(u)$ and $z = z_1 \ldots z_n$ is such that 
$u \leq z$, then it may not be the case that 
$w = vz \in \S(u)$ because there might be another 
embedding of $u$ in the last $2n-1$
 letters of $w$, starting in $v$ and ending in $z$. 
 Of course, there can be 
no embedding of $u$ which starts to the left of the last $2n-1$ letters of 
$w$ since $v \in \A(u)$. 
For each $1 \leq i \leq n-1$, 
we define $\S^{(i)}(u)$ to be set of all words $w = w_1 \ldots w_m$ such 
that
\begin{description}
\item[(i)] $u \leq w_{m-n+1} \ldots w_m$ (so that $u$ embeds into the suffix 
of length $n$ of $w$) 
and 
\item[(ii)] the left-most embedding of $u$ into $w$ starts at position 
$m-2n+i$.
\end{description}
We then let 
$$
S^{(i)}(u;t,x) = \sum_{w \in \S^{(i)}(u)} \text{wt}(w) =  \sum_{w \in \S^{(i)}(u)} x^{\Sigma(w)} t^{|w|}.
$$
Thus  
\begin{equation}\label{eq:S2}
 \S(u) = \A(u)\mathcal{W}(u)- \bigcup_{i=1}^{n-1} \S^{(i)}(u).
\end{equation}

Now,
\begin{eqnarray}\label{eq:S3}
\sum_{w \in \A(u)\mathcal{W}(u)} x^{\Sigma(w)} t^{|w|} &=& 
A(u;t,x)\frac{t^nx^{\Sigma(u)}}{(1-x)^n} \nonumber \\
&=& \frac{(1-x)}{(1-x-tx)}(1-S(u;t,x))\frac{t^nx^{\Sigma(u)}}{(1-x)^n}.
\end{eqnarray}
We claim that we have the following lemma. 
\begin{lemma}\label{lem:S}
 Let $u=u_1 u_2 \ldots u_n \in \P^*$ have an increasing/decreasing factorization.   Then 
for $1 \leq i \leq n-1$,  
$$
S^{(i)}(u;t,x) = S(u;t,x)  t^{n-i}x^{d_i+\Sigma(s_i)} \left(\frac{1}{1-x}\right)^{n-i}.
$$
\end{lemma}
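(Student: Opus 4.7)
The plan is to reconstruct $\S^{(i)}(u)$ from $\S(u)$ by tacking on a short tail (completing the suffix embedding) and a weight shift (absorbing the overlap with the leftmost embedding). Given $w \in \S^{(i)}(u)$ of length $m$, let $\tilde{w} = w_1 \ldots w_{m-n+i}$ be the prefix ending at the last letter of the leftmost embedding, and let $r = w_{m-n+i+1} \ldots w_m$ be the trailing $n-i$ letters. Since the leftmost embedding of $u$ in $w$ occupies the last $n$ positions of $\tilde{w}$ and no embedding of $u$ lies further to the left, $\tilde{w} \in \S(u)$. The tail $r$ must satisfy $r_\ell \geq u_{i+\ell}$ for $\ell = 1, \ldots, n-i$ so that the length-$n$ suffix of $w$ embeds $u$, contributing a generating function factor of $t^{n-i} x^{\Sigma(s_i)}/(1-x)^{n-i}$.

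Not every $\tilde{w} \in \S(u)$ extends to a word in $\S^{(i)}(u)$: the last $i$ letters of $\tilde{w}$, which form the overlap of the leftmost and suffix embeddings of $u$ in $w$, must satisfy $\tilde{w}_{m'-i+k} \geq \max(u_k,\, u_{n-i+k})$ for $k=1, \ldots, i$ (where $m' = m-n+i = |\tilde{w}|$), rather than only $\geq u_{n-i+k}$. To handle this I would define $\psi: \S(u) \to \S(u)$ by $\psi(\tilde{w})_j = \tilde{w}_j$ for $j \leq m'-i$ and $\psi(\tilde{w})_{m'-i+k} = \tilde{w}_{m'-i+k} + \max(u_k - u_{n-i+k},\, 0)$ for $k=1, \ldots, i$. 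This map preserves length, lands in the subset of $\S(u)$ obeying the stronger overlap constraint, and shifts the $x$-weight by exactly $x^{d_i}$; its obvious inverse subtracts the same shifts and automatically stays in $\S(u)$ because lowering letters cannot create new embeddings.

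The main obstacle is checking that $\psi(\tilde{w})$ itself remains in $\S(u)$---in particular, that raising the last $i$ letters introduces no new embedding of $u$. Any hypothetical new embedding would end at some position $q = m' - \ell$ with $1 \leq \ell \leq i-1$, and its failure in $\tilde{w}$ must occur at a modified index $l_0 \in [\ell+n-i+1,\, n]$, satisfying $u_{l_0-\ell} \leq \tilde{w}_{q-n+l_0} < u_{l_0}$ and being ``repaired'' by the shift at position $m'-n+l_0$. Setting $a = l_0 - \ell$, a short computation shows this forces both the ascent $u_a < u_{l_0}$ and the strict descent $u_{a-(n-i)} > u_a$ in $u$. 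But the ascent forces $a$ to lie in the weakly increasing prefix $u_{inc}$, and then $a-(n-i) < a$ also lies in $u_{inc}$, giving $u_{a-(n-i)} \leq u_a$ and contradicting the descent. Hence $\psi$ is a weight-shifting bijection, and multiplying by the tail factor gives
\[
S^{(i)}(u;t,x) = S(u;t,x) \cdot t^{n-i}\, x^{d_i + \Sigma(s_i)} \left(\frac{1}{1-x}\right)^{n-i},
\]
as claimed.
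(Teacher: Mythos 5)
Your proposal is correct and follows essentially the same route as the paper: split off the length-$(n-i)$ tail embedding $s_i$ (contributing $t^{n-i}x^{\Sigma(s_i)}/(1-x)^{n-i}$), and relate the remaining prefixes to $\S(u)$ by the weight-shifting bijection that raises/lowers the overlap positions by $\max(u_k-u_{n-i+k},0)$, with the increasing/decreasing factorization ruling out any newly created embedding. Your contradiction (a repaired position would need an ascent $u_a<u_{l_0}$ forcing $a$ into $u_{inc}$, where the required descent $u_{a-(n-i)}>u_a$ is impossible) is just a repackaging of the paper's observation that $D^{(i)}(u)$ lies entirely in the decreasing part of $u$.
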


Given Lemma \ref{lem:S}, it is easy to complete the 
proof of Theorem \ref{thm:S}. That is, our definitions ensure that
 $\S^{(1)}(u), \S^{(2)}(u), \ldots,  
\S^{(n-1)}(u)$ are pairwise disjoint, so that 
\begin{eqnarray*}
\sum_{w \in \bigcup_{i=1}^{n-1} \S^{(i)}(u)} x^{\Sigma(w)} t^{|w|} &=& 
\sum_{i=1}^{n-1} S^{(i)}(u;t,x) \\
&=& S(u;t,x) \sum_{i=1}^{n-1}  t^{n-i}x^{d_i+\Sigma(s_i)} \left(\frac{1}{1-x}\right)^{n-i}.
\end{eqnarray*}
Thus it follows from (\ref{eq:S2}) and (\ref{eq:S3}) that 
\begin{eqnarray*}
S(u;t,x) & = & \frac{(1-x)}{(1-x-tx)}(1-S(u;t,x))\frac{t^nx^{\Sigma(u)}}{(1-x)^n}\\
&&- S(u;t,x) \sum_{i=1}^{n-1} \frac{x^{d_i+\Sigma(s)_i}t^{n-i}}{(1-x)^{n-i}}.
\end{eqnarray*}
Solving for $S(u;t,x)$ will yield the result in the theorem.

Thus we need only prove Lemma \ref{lem:S}.  To this end, fix $i$, 
$1 \leq i \leq n-1$, and suppose that $w =w_1 \ldots w_m \in \S^{(i)}(u)$. 
If $\bar{w}= w_1 \ldots w_{m-n+i}$, then our definitions ensure 
that 
\begin{enumerate}
\item $\bar{w} \in \S(u)$, 
\item $u_1 \ldots u_i \leq w_{m-n+1} \ldots w_{m-n+i}$ and 
\item $s_i = u_{i+1} \ldots u_n \leq w_{m-n+i+1} \ldots w_m$. 
\end{enumerate}

Now, the generating function of all words $v$ of length $n-i$ such 
that $s_i \leq v$ is $\frac{x^{\Sigma (s_i)} t^{n-i}}{(1-x)^{n-i}}$.
So let $\bar{\S}^{(i)}(u)$ denote the set of all words 
$\bar{w}$ that satisfy conditions 1 and 2,  and let 
$$\bar{S}^{(i)}(u;t,x) = \sum_{\bar{w} \in \bar{\S}^{(i)}(u)} x^{\Sigma(\bar{w})}t^{|\bar{w}|}.$$ 
Then 
$$
S^{(i)}(u;t,x) =\bar{S}^{(i)}(u;t,x)\frac{x^{\Sigma (s_i)} t^{n-i}}{(1-x)^{n-i}}.
$$
Thus we need only show that 
\begin{equation}\label{eq:S6}
\bar{S}^{(i)}(u;t,x) = x^{d_i} S(u;t,x).
\end{equation}
Now suppose that $v = v_1 \ldots v_p \in \bar{\S}^{(i)}(u)$. Then 
let $\tilde{v} = \tilde{v}_1 \ldots \tilde{v_p}$ be the 
word that results from $v$ by decrementing $v_{p-i+j}$ by $u_j - u_{n-i+j}$ if $n-i+j \in  D^{(i)}(u)$ and leaving all other letters the same.  If 
$n-i+j \in D^{(i)}(u)$, then  $v_{p-i+j} \geq u_j$, and hence
$\tilde{v}_{p-i+j} \geq u_{n-i+j}$. Thus it will still be the case that  
$u$ embeds in the final segment of $\tilde{v}$ of length $n$ so that 
$\tilde{v} \in \S(u)$. Thus to complete the proof of (\ref{eq:S6}), 
we need only show that if we start with a word 
$\tilde{v} = \tilde{v}_1 \ldots \tilde{v_p}$ in $\S(u)$ and 
create a new word $v =v_1 \ldots v_p$ by incrementing $\tilde{v}_{p-i+j}$ by $u_j - u_{n-i+j}$ if $n-i+j \in  D^{(i)}(u)$ and leaving all other letters the same, then $v \in \bar{\S}^{(i)}(u)$.  Clearly $v$ satisfies condition (2) above.  
The only question is whether $v$ is still in $\S(u)$.  That is, 
since we have incremented some letters in $\tilde{v}$ to get $v$, we 
might have  created a 
new embedding of $u$ which starts to the left of position 
$p-n+1$.  If so, any such embedding must contain 
at least one position of the form $p-i+j$ where $n-i+j \in D^{(i)}(u)$. However if $u_r$ is the letter in this new embedding of $u$ into $v$ 
which corresponds to position 
$p-i+j$, then $r$  must be strictly greater than $n-i+j$. 
But if $u= u_1 \leq \cdots \leq u_k > u_{k+1} \geq \cdots \geq u_n$, 
then it must always be the case that $D^{(i)}(u) \subseteq \{k+1, \ldots, n\}$. That is, if $j < n-i+j \leq k$, then $u_j \leq u_{n-i+j}$ and 
hence $n-i+j \not \in D^{(i)}(u)$. 
Hence $u_{n-i+j} \geq u_r$. 
But then $\tilde{v}_{p-i+j} \geq u_{n-i+j} \geq u_r$ which would mean 
that there would have been an embedding of $u$ into $\tilde{v}$ which 
started to the left of $p-n+1$.  Since $\tilde{v}$ was assumed to 
be in $\S(u)$, there can be no such embedding and hence 
$v \in \S(u)$. Thus (\ref{eq:S6}) holds and the lemma is proved.  \qed \\

To  illustrate the ideas in the proof, consider  $u = 1\:2\:6\:5\:3\:2$,
so that $u_{inc} = 1\: 2\: 6$ and $u_{dec} = 5\:3\:2$, and let $i=5$. Then elements of $\bar{\S}^{(5)}(u)$ must end in an embedding of $u$ in 
the final six characters and an embedding of $1\:2\:6\:5\:3$ in the final five characters, as shown:
$$
\begin{array}{ccccccccc}
\tilde{v}= & \cdots  & \bullet & \bullet & \bullet & \star & \star & \star \\
& & 1& 2 & 6 & 5 & 3 & 2 \\
& & & 1& 2 & 6 & 5 & 3 \\
\end{array},
$$
where the stars indicate the positions in $\tilde{v}$ that must be increased to form $v$. 
Note that the stars all embed characters of $u_{dec}$, and that  $d_5 = (6-5)+(5-3)+(3-2) =4$.  
If $v$ were to contain a new embedding of $u$ to the left of the first original embedding, that
new embedding must end in the second or third position from the end:
$$
\begin{array}{cccccccccc}
v= & \cdots  &  \bullet & \bullet & \bullet & \bullet & \star & \star & \star \\
& &  & 1& 2 & 6 & 5 & 3 & 2   \\
& &  1& 2 & 6 & 5 & 3  & 2 \\
\end{array}
$$
or 
$$
\begin{array}{cccccccccccc}
v= & \cdots  &  \bullet & \bullet & \bullet & \bullet & \bullet & \star & \star & \star  \\
& &  & & 1& 2 & 6 & 5 & 3 & 2 &  \\
& &  1& 2 & 6 & 5 & 3  & 2 & &  & \\
\end{array}.
$$
But in both cases, the characters below the stars
are decreasing, so such an embedding would have already existed 
in $\tilde{v}$.

It's worth noting here that the condition that $u$ has an increasing/decreasing
factorization is necessary for the technique in the proof of Lemma~\ref{lem:S}
to be valid.  That is, if $u$ does not have an increasing/decreasing factorization,
there is always at least one index $i$ where words counted by 
$\bar{S}^{(i)}(u;t,x)$ can not be formed by simply starting 
with a word $\tilde{v} \in \S(u)$ and creating a
word $v =v_1 \ldots v_p$ by incrementing $\tilde{v}_{p-i+j}$ by $u_j - u_{n-i+j}$ if $n-i+j \in  D^{(i)}(u)$ and leaving all other letters the same. 
For example, consider $u=2112$ with $i=2$.   Then $D^{(2)}(u) = \{3\}$ and 
$d_2(u) =1$. However if we start with $\tilde{v} = 
122112 \in \S(u)$ and increment $\tilde{v}_{5}$ to obtain $v$, 
then $v=122122$ which is not in $\S(u)$ because there is an embedding 
of $u$ which starts at position 2.  
The problem here is that the second 1 in $u$ is followed by a larger character, and
also has a larger character to its left.  A similar situation will always occur for 
at least one $i$ when  $u$  does not have an increasing/decreasing factorization. Experimental evidence suggests the following conjecture.

\begin{conjecture}

For $u \in \P^*$, $S(u; t,x)= \frac{x^s t^r}{P(u;t,x)}$ where
$P(u;t,x)$ is a polynomial if and only if $u$ has an increasing/decreasing factorization. 

\end{conjecture}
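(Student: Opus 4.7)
The forward direction is Theorem~\ref{thm:S}, so only the converse remains. Since the minimum-weight element of $\S(u)$ is $u$ itself (weight $t^n x^{\Sigma(u)}$), any representation $S(u;t,x) = x^s t^r / P(u;t,x)$ with $P$ a polynomial satisfying $P(0,0)\neq 0$ forces $s = \Sigma(u)$, $r = n$, and, after rescaling, $P(u;0,0) = 1$. The converse therefore reduces to showing that if $u$ has no increasing/decreasing factorization, then $t^n x^{\Sigma(u)} / S(u;t,x)$, viewed as a formal power series, is not a polynomial.

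My plan is to extend the decomposition $\S(u) = \A(u)\W(u) \setminus \bigcup_{i=1}^{n-1} \S^{(i)}(u)$, which holds unconditionally. The remark following the proof of Theorem~\ref{thm:S} (using $u=2112$, $i=2$) localizes the failure: when $u$ lacks an inc/dec factorization, there is an index $i$ and a position $j$ with $n-i+j \in D^{(i)}(u)$ such that the incrementation map from $\S(u)$ into $\S^{(i)}(u)$ produces words containing a spurious embedding of $u$ starting to the left of position $p-n+1$. I would correct $S^{(i)}(u;t,x)$ by inclusion-exclusion, writing it as a signed sum of generating functions $S(u';t,x)$ for a finite list of auxiliary words $u'$ that track the spurious embeddings. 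Substituting these corrected expressions into the main decomposition yields a linear equation for $S(u;t,x)$ whose solution inherits the denominators of the auxiliary $S(u';t,x)$.

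The main obstacle will be arguing that the resulting expression cannot collapse to have a monomial numerator. My attack is coefficient extraction: the assumption $S(u;t,x) = t^n x^{\Sigma(u)}/P(u;t,x)$ is equivalent to $S(u;t,x)\,P(u;t,x)=t^n x^{\Sigma(u)}$, so every coefficient of the product above the monomial $t^n x^{\Sigma(u)}$ must vanish. I would try to identify a specific coefficient of $S(u;t,x)$ whose value is forced by the presence of a valley $u_{k-1} > u_k < u_{k+1}$, and show that no polynomial $P$ with $P(u;0,0) = 1$ and the a priori degree bounds inherited from the decomposition can simultaneously annihilate all the resulting linear equations. As a starting point, I would compute $S(u;t,x)$ in closed form for the smallest non-inc/dec words, namely $212$, $213$, and $2112$, isolate the specific monomial in the numerator that testifies to the valley, and then attempt an induction on $|u|$ that localizes the obstruction to a neighborhood of the valley, perhaps by deleting letters far from index $k$ and relating $S(u;t,x)$ to $S$ of a shorter non-inc/dec word for which the obstruction has already been established.
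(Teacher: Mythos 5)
This statement is left as an open conjecture in the paper --- the authors offer only the remark that ``experimental evidence suggests'' it, so there is no proof of it to compare yours against. The forward direction is indeed immediate from Theorem~\ref{thm:S}, and your normalization observation (that $P(0,0)\neq 0$ may be assumed, forcing $s=\Sigma(u)$, $r=n$, $P(u;0,0)=1$) is correct. But what you have written for the converse is a research program, not a proof, and the two crucial steps are exactly the ones you defer. First, the proposed inclusion--exclusion correction of $S^{(i)}(u;t,x)$ by ``a finite list of auxiliary words $u'$ that track the spurious embeddings'' is not carried out: you do not show that such a finite list exists, that the auxiliary generating functions $S(u';t,x)$ are themselves of a form you can control (the auxiliary words may again lack increasing/decreasing factorizations, threatening circularity), or that the resulting linear equation determines the denominator of $S(u;t,x)$ up to the cancellation you need to rule out. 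Second, the ``main obstacle'' you name --- showing the numerator cannot collapse to a monomial --- is the entire content of the converse, and your treatment of it is explicitly conditional (``I would try to identify a specific coefficient\dots''). The proposed induction that deletes letters far from the valley is likewise unjustified: under generalized factor order, deleting a letter of $u$ changes $\S(u)$ in ways that are not obviously expressible in terms of $S$ of the shorter word.

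It is worth noting that the paper does verify the converse for all words of length $3$: Theorem~\ref{thm:bac} computes $S(bac;t,x)$ exactly for $a<b\leq c$ (the only length-$3$ shapes without an increasing/decreasing factorization, up to reversal), and the proof of Corollary~\ref{cor:bac} observes that the nontrivial numerator factor $1+tx^c(1+x+\cdots+x^{b-a-1})$ does not divide the denominator, e.g.\ by evaluating at $x=1$. That divisibility argument is the concrete version of the non-collapse step you are missing, and your plan to start from $212$, $213$, and $2112$ is reasonable --- but promoting it to arbitrary $u$ without an increasing/decreasing factorization is precisely the open problem, and your proposal does not close it.
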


It is a consequence of Corollary 4.2 in \cite{KLRS} that if 
$u$ and $v$ have increasing/decreasing factorizations and 
$u$ is a rearrangement of $v$, then $u \backsim v$. 
We shall give a new proof of that fact here, as well as prove the 
converse. That is, if $u$ and $v$ both have increasing/decreasing factorizations and $u \backsim v$, then $u$ and $v$ are rearrangements,  showing that 
the weak rearrangement conjecture holds for words with increasing/decreasing factorizations.

We begin with the following lemma.

\begin{lemma}\label{lem:rearrangements}
Suppose $u = u_1 \ldots u_n$ is a rearrangement of  $v=v_1 \ldots v_n$ and 
that $u$ and $v$ have increasing/decreasing factorizations. 
For each $i$,  $1 \leq i \leq n-1$, let 
$s_i(u) = u_{i+1} \ldots u_n$,  $s_i(v) = v_{i+1} \ldots v_n$, 
$d_i(u) = \sum_{n-i+j \in D^{(i)}(u)} (u_j - u_{n-i+j})$, and 
$d_i(v) = \sum_{n-i+j \in D^{(i)}(v)} (v_j - v_{n-i+j})$. Then 
for all $1 \leq i \leq n-1$,
$$
d_i(u)+\Sigma(s_i(u)) = d_i(v)+\Sigma(s_i(v)).
$$
\end{lemma}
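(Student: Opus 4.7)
The plan is to reduce the identity to a statement about binary level-indicator strings and then exploit the bitonic structure that the increasing/decreasing factorization imposes on each level.

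First I would rewrite each side in a more symmetric form. Using $(u_j - u_{n-i+j})^+ = u_j - \min(u_j, u_{n-i+j})$, a short manipulation gives
$$
d_i(u) + \Sigma(s_i(u)) = \Sigma(u) - \sum_{j=1}^i \min(u_j, u_{n-i+j}),
$$
and likewise for $v$. Since $u$ and $v$ are rearrangements, $\Sigma(u) = \Sigma(v)$, so the lemma reduces to proving
$$
\sum_{j=1}^i \min(u_j, u_{n-i+j}) = \sum_{j=1}^i \min(v_j, v_{n-i+j})
$$
for every $1 \leq i \leq n-1$.

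Next I would apply the identity $\min(a, b) = \sum_{k \geq 1} [a \geq k][b \geq k]$ to decompose each minimum level by level. For $w \in \P^*$ and an integer $k \geq 1$, let $b^{(k)}(w) \in \{0,1\}^n$ be the indicator string whose $j$-th entry is $1$ if and only if $w_j \geq k$. Then
$$
\sum_{j=1}^i \min(w_j, w_{n-i+j}) = \sum_{k \geq 1} \sum_{j=1}^i b^{(k)}(w)_j \, b^{(k)}(w)_{n-i+j}.
$$
The key observation is that when $w$ has an increasing/decreasing factorization, for each $k$ the set of positions $j$ with $w_j \geq k$ is a contiguous interval containing the peak of $w$, so $b^{(k)}(w)$ has the bitonic form $0^{a_k} 1^{\ell_k} 0^{c_k}$ with $a_k + \ell_k + c_k = n$.

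The final step is a direct computation: for a bitonic binary string $b = 0^a 1^\ell 0^c$ of length $n$ the 1-positions form $[a+1, a+\ell]$, and one checks that the set of $j \in [1, i]$ with $b_j = b_{n-i+j} = 1$ is exactly the interval $[a+1, i-c]$, whose size is $\max(0, \ell + i - n)$. This quantity depends only on $\ell$, $i$, and $n$. Since $u$ and $v$ share the same multiset of letters, $\ell_k$ is the same for both words at every level $k$, so the inner sums coincide term by term and the outer sums over $k$ agree. I do not foresee a serious obstacle; the crux is finding the symmetric reformulation in the first step, after which everything reduces to an elementary interval-intersection calculation that is uniform across all cases, including $2i > n$ and levels where $\ell_k \in \{0, n\}$.
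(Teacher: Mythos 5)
Your proof is correct, and it takes a genuinely different route from the paper's. Your reduction $d_i(w)+\Sigma(s_i(w))=\Sigma(w)-\sum_{j=1}^i\min(w_j,w_{n-i+j})$ is valid because $(w_j-w_{n-i+j})^{+}=w_j-\min(w_j,w_{n-i+j})$; the layer identity $\min(a,b)=\sum_{k\ge 1}[a\ge k][b\ge k]$ is standard; the level sets $\{j\,:\, w_j\ge k\}$ of a word with an increasing/decreasing factorization are indeed intervals of the form $0^{a_k}1^{\ell_k}0^{c_k}$ (a position at level $k$ in the increasing prefix forces the peak to be at level $k$, and likewise on the decreasing side); and the overlap count $\max(0,\ell_k+i-n)$ checks out, since the relevant set is $[a_k+1,\,i-c_k]$, whose upper endpoint is at most $i$ because $c_k\ge 0$ and at most $a_k+\ell_k$ because $i\le n$, so the constraints $j\le i$ and $b_j=1$ cost nothing extra. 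Since $\ell_k$ depends only on the multiset of letters, the level-by-level counts for $u$ and $v$ coincide and the lemma follows. The paper proceeds quite differently: it first reduces to the case where $u$ is weakly increasing (so $d_i(u)=0$ and the target is $\Sigma(s_i(u))$), decomposes $v$ into four blocks $A^i(u)$, $B^i(u)$, $C^i(u)$, $D^i(u)$ determined by position relative to $i$ and to the peak of $v$, and runs a four-case analysis according to which block contains $v_i$, explicitly tracking which comparisons contribute to $d_i(v)$. Your argument avoids both the reduction to the sorted case and the case split, and it isolates the invariant $\sum_j\min(w_j,w_{n-i+j})$ as a level-wise interval-overlap count, making completely transparent where the increasing/decreasing hypothesis enters (every level set is an interval). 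The paper's version is longer but more explicit about how the excess $d_i(v)$ arises from the block structure, which meshes with the rearrangement bijections used elsewhere in the paper.
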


\begin{proof}
First suppose that $u = u_1 \ldots u_n$ where $u_1 \leq \cdots \leq u_n$. 
Then for each $i$, $1 \leq i \leq n-1$, $d_i(u) =0$ and $\Sigma(s_i(u)) = 
\sum_{j=i+1}^n u_j$. So it suffices to show that 
$d_i(v)+\Sigma(s_i(v)) = \Sigma(s_i(u))$ for all $1 \leq i \leq n-1$ whenever 
$v$ has an increasing/decreasing factorization and $v$ is a rearrangement 
of $u$. So fix $i$, $1 \leq i \leq n-1$,  and let $\sg = \sg_1 \ldots \sg_n$ be a permutation of $\{1,\ldots, n\}$ such that 
$$v = 
u_{\sg_1} \leq \cdots \leq u_{\sg_j} > u_{\sg_{j+1}} \geq \cdots \geq u_{\sg_n}.$$ 
Then let 
\begin{eqnarray*}
A^i &=& \{s: s \leq i  \text{ and }  u_s \in \{u_{\sg_1}, \ldots ,u_{\sg_j}\}\}, \\ 
B^i &=& \{s: s > i  \text{ and }  u_s \in \{u_{\sg_1}, \ldots ,u_{\sg_j}\}\}, \\ 
C^i &=& \{s: s > i \text{ and } u_s \in \{u_{\sg_{j+1}}, \ldots ,u_{\sg_n}\}\}, 
\  \mbox{and} \\ 
D^i &=& \{s: s \leq i  \text{ and }  u_s \in \{u_{\sg_{j+1}}, \ldots ,u_{\sg_n}\}\}.
\end{eqnarray*}
For example, suppose $u =1~2~3~3~4~5~5~6~7~7$ and 
$\sg = 2~3~4~9~10~8~7~6~5~1$ so that 
$$
\begin{array}{ccccccccccccc}
 v & = & u_2 & u_3 & u_4 & u_9 & u_{10} & u_8 & u_7 & u_6 & u_5 & u_1\\
 & = & 2 & 3 & 3 & 7 & 7 & 6 & 5 & 5 & 4 & 1 
\end{array}
$$
and $j=5$.   Then for $i=6$, $A^6 =\{2,3,4\}$, $B^6 =\{9,10\}$, $C^6 = \{7,8\}$, and 
$D^6 = \{1,5,6\}$. Let $a_i = |A^i|$, $b_i = |B^i|$, $c_i = |C^i|$, and 
$d_i = |D^i|$. Then our definitions force $a_i+d_i =i$, $b_i+c_i = n-i$, 
$a_i + b_i =j$, and $c_i +d_i =n-j$. For any set 
$D = \{d_1 < \cdots < d_r\} \subseteq  \{1, \ldots ,n\}$, let 
\begin{eqnarray*}
D(u)\!\!\uparrow &=& u_{d_1}u_{d_2} \ldots u_{d_r} \ \mbox{and} \\
D(u)\!\!\downarrow &=& u_{d_r}u_{d_{r-1}} \ldots u_{d_1}.
\end{eqnarray*}
Thus $v = A^i(u)\!\!\uparrow B^i(u)\!\!\uparrow C^i(u)\!\!\downarrow D^i(u)\!\!\downarrow$ and 
$\Sigma(B^i(u)\!\!\uparrow) + \Sigma(C^i(u)\!\!\downarrow) =\Sigma(s_i(u))$.
We then have four cases to consider depending  on whether 
$v_i \in  A^i(u)\!\!\uparrow$, $v_i \in  B^i(u)\!\!\uparrow$, $v_i \in  C^i(u)\!\!\downarrow$, or $v_i \in  D^i(u)\!\!\downarrow$. \\
\ \\
{\bf Case 1.} $v_i \in  A^i(u)\!\!\uparrow$.\\
\ \\
In this case, it must be that $i = a_i$ and $A^i(u)\!\!\uparrow =u_1 \ldots u_i$. 
But then $D^i = \emptyset$ and $s_i(v) = B^i(u)\!\!\uparrow C^i(u)\!\!\downarrow$,
a rearrangement of $s_i(u)$.    Moreover,  it will 
be the case that $v_j \leq v_{n-i+j}$ for $j=1, \ldots, i$ so that 
$d_i(v) = 0$. Thus $d_i(v) + \Sigma(s_i(v)) = \Sigma(s_i(u))$ as 
desired.   As an example, with $u$ as in the previous example, consider
$$
\begin{array}{cccccccc|cc|cc}
 v & = & u_1 & u_2 & u_3 & u_4 & u_{5} & u_6 & u_9 & u_{10} & u_8 & u_7\\
 & = & 1 & 2 & 3 & 3 & 4 & 5 & 7 & 7 & 6 & 5 \end{array}
$$
so that $j=8$, and again let $i = 6$.   Then as indicated by the dividers, $A^6(u)\!\!\uparrow= u_1 \ldots u_6$ so that $a_i = i = 6$,  $B^6(u)\!\!\uparrow = u_9 u_{10}$ and 
$C^6(u)\!\!\downarrow = u_8 u_7$.\\
\ \\
{\bf Case 2.} $v_i \in  B^i(u)\!\!\uparrow$.\\
\ \\
In this case $a_i <i \leq 
a_i+b_i$.   For example, with the same $u$ as above, let 
$$
\begin{array}{cccccc|ccc|c|ccc}
 v & = & u_2 & u_3 & u_4 & u_6 & u_{7} & u_9 & u_{10} & u_{8} & u_5 & u_1\\
 & = & 2 & 3 & 3 & 5 & 5 & 7 & 7 & 6 & 4 & 1 \end{array}
$$
so that $j=7$, and again let $i = 6$.  Then
$A^6(u)\!\!\uparrow= u_2 u_3 u_4 u_6$, $B^6(u)\!\!\uparrow = u_7 u_9 u_{10}$, $C^6(u)\!\!\downarrow = u_8$, 
and $D^6(u)\!\!\downarrow = u_5 u_1$, so $v_6 \in B^6(u)\!\!\uparrow$ and $a_6 = 4 < i \leq 7 = a_6 + b_6$.

Now let $B^i_1(u) = v_{a_i+1} \ldots v_i$ and 
$B^i_2(u) = v_{i+1} \ldots v_{a_i+b_i}$. 
Then $s_i(v) = B^i_2(u)C^i(u)\!\!\downarrow D^i(u)\!\!\downarrow$. 
When we compare the first $i$ letters of $v$ with the last $i$ letters of 
$v$, we see that the letters in $B_1^i(u)$ are compared with 
the letters in $D^i(u)\!\!\downarrow$ since 
$|B^i_1(u)| = i-a_i = |D^i(u)\!\!\downarrow|$. But the letters 
in $D^i(u)\!\!\downarrow$ come from $\{u_s:s\leq i\}$ and the letters 
from $B_1^i(u)$ come from $\{u_s:s> i\}$.  Thus any letter in 
$B_1^i(u)$ is greater than or equal to every letter in $D^i(u)\!\!\downarrow$ 
so that such letters will contribute 
$\Sigma(B^i_1(u)) - \Sigma(D^i(u)\!\!\downarrow)$ to $d_i(v)$. However 
the letters in $A^i(u)\!\!\uparrow$ will be compared to letters 
that lie in either $C^i(u)\!\!\downarrow$, $B^i(u)\!\!\uparrow$, or later 
letters in $A^i(u)\!\!\uparrow$, and hence they will contribute 
0 to $d_i(v)$. Thus 
\begin{eqnarray*}
d_i(v)+\Sigma(s_i(v))&=&  \Sigma(B^i_1(u)) - \Sigma(D^i(u)\!\!\downarrow)  +
\Sigma(B^i_2(u)) + \Sigma(C^i(u)\!\!\downarrow) + \Sigma(D^i(u)\!\!\downarrow)  \\
&=&   \Sigma(B^i(u)\!\!\uparrow) + \Sigma(C^i(u)\!\!\downarrow) =\Sigma(s_i(u)).
\end{eqnarray*}
\ \\
{\bf Case 3.} $v_i \in  C^i(u)\!\!\downarrow$.\\
\ \\
In this case $a_i +b_i <i  \leq a_i +b_i +c_i$.   For example, with the same $u$ 
as above let $$
\begin{array}{cccc|cc|cc|cccc}
 v & = & u_2 & u_3 &  u_9 & u_{10} & u_{8} & u_7 & u_6 & u_5 & u_4 & u_1\\
 & = & 2 & 3 & 7 & 7 & 6 & 5 & 5 & 4 & 3 & 1 \end{array}
$$
so that $j=4$, and again let $i = 6$.  Then
$A^6(u)\!\!\uparrow= u_2 u_3$, $B^6(u)\!\!\uparrow = u_9 u_{10}$, $C^6(u)\!\!\downarrow = u_8 u_7$, 
and $D^6(u)\!\!\downarrow = u_6 u_5 u_4 u_1$, so $v_6 \in C^6(u)\!\!\uparrow$ and $a_6 +b_6= 4 < i \leq 6 = a_6 + b_6+c_6$.

Now let $C^i_1(u) = v_{a_i+b_i+1} \ldots v_i$ and $C^i_2(u) = v_{i+1} \ldots v_{a_i+b_i+c_i}$. 
Then $s_i(v) = C^i_2(u)D^i(u)\!\!\downarrow$. 
When we compare the first $i$ letters of $v$ with the last $i$ letters of 
$v$, we see that the letters in $B^i(u)\!\!\uparrow C^i_1(u)$ are compared with 
the letters in $D^i(u)\!\!\downarrow$ since 
$|B^i(u)\!\!\uparrow| + |C^i_1(u)| = i-a_i = |D^i(u)\!\!\downarrow|$. But the letters 
in $D_i^i(u)\!\!\downarrow$ come from $\{u_s:s\leq i\}$ and the letters 
from $B^i(u)\!\!\uparrow C^i_1(u)$ come from $\{u_s:s> i\}$.  Thus any letter in 
$B^i(u)\!\!\uparrow C^i_1(u)$ is greater than or equal to every letter in $D^i(u)\!\!\downarrow$ 
so that such letters will contribute 
$\Sigma(B^i(u)\!\!\uparrow) + \Sigma(C^i_1(u)) - \Sigma(D^i(u)\!\!\downarrow)$ to $d_i(v)$. However 
the letters in $A^i(u)\!\!\uparrow$ will be compared to letters 
that lie in either $C^i(u)\!\!\downarrow$, $B^i(u)\!\!\uparrow$, or later 
letters in $A^i(u)\!\!\uparrow$, and hence they will contribute 
0 to $d_i(v)$. Thus 
\begin{eqnarray*}
d_i(v)+\Sigma(s_i(v))&=&  \Sigma(B^i(u)\!\!\uparrow) + \Sigma(C^i_1(u)) - \Sigma(D^i(u)\!\!\downarrow)  +
\Sigma(C^i_2(u)) + \Sigma(D^i(u)\!\!\downarrow)  \\
&=&   \Sigma(B^i(u)\!\!\uparrow) + \Sigma(C^i(u)\!\!\downarrow) =\Sigma(s_i(u)).
\end{eqnarray*}
\ \\
{\bf Case 4.} $v_i \in  D^i(u)\!\!\uparrow$.\\
\ \\
In this case $a_i +b_i +c_i <i$.  For example, with the same $u$ 
as above, now let $$
\begin{array}{ccc|cc|cc|ccccc}
 v & = & u_2 & u_9 &  u_{10} & u_{8} & u_{7} & u_6 & u_5 & u_4 & u_3 & u_1\\
 & = & 2 & 7 & 7  & 6 & 5 & 5 & 4 & 3 & 3 & 1 \end{array}
$$
so that $j=3$, and once again let $i = 6$.  Then
$A^6(u)\!\!\uparrow= u_2$, $B^6(u)\!\!\uparrow = u_9 u_{10}$, $C^6(u)\!\!\downarrow = u_8 u_7$, 
and $D^6(u)\!\!\downarrow = u_6 u_5 u_4 u_3 u_1$, so $v_6 \in D^6(u)\!\!\uparrow$ and $a_6 +b_6+c_6 = 5 < i$.

Now let $D^i_1(u) = v_{a_i+b_i+c_i+1} \ldots v_i$ and $D^i_2(u) = v_{i+1} \ldots v_n$. 
Then $s_i(v) = D^i_2(u)$. 
When we compare the first $i$ letters of $v$ with the last $i$ letters of 
$v$, we see that the letters in $B^i(u)\!\!\uparrow C^i(u)\!\!\downarrow D_1^i(u)$ are compared with 
the letters in $D^i(u)\!\!\downarrow$ since 
$|B^i(u)\!\!\uparrow| + |C^i(u)\!\!\downarrow|+ |D^i_1(u)| = i-a_i = |D^i(u)\!\!\downarrow|$. But each letter in
$B^i(u)\!\!\uparrow C^i(u)\!\!\downarrow D_1^i(u)$ will be greater than or equal 
to its corresponding letter in $D^i(u)\!\!\downarrow$, 
so that such letters will contribute 
\begin{eqnarray*}
&&\Sigma(B^i(u)\!\!\uparrow)+\Sigma( C^i(u)\!\!\downarrow) + \Sigma( D_1^i(u)) 
-( \Sigma( D_1^i(u))+\Sigma( D_2^i(u))) = \\
&&\Sigma(B^i(u)\!\!\uparrow)+\Sigma( C^i(u)\!\!\downarrow)- \Sigma( D_2^i(u))
\end{eqnarray*}
to $d_i(v)$. However 
the letters in $A^i(u)\!\!\uparrow$ will be compared to letters 
that lie in either $C^i(u)\!\!\downarrow$, $B^i(u)\!\!\uparrow$, or later 
letters in $A^i(u)\!\!\uparrow$ and hence they will contribute 
0 to $d_i(v)$. Thus 
\begin{eqnarray*}
d_i(v)+\Sigma(s_i(v))&=&  \Sigma(B^i(u)\!\!\uparrow)+\Sigma( C^i(u)\!\!\downarrow)- \Sigma( D_2^i(u))+  \Sigma( D_2^i(u))\\
&=&   \Sigma(B^i(u)\!\!\uparrow) + \Sigma(C^i(u)\!\!\downarrow) =\Sigma(s_i(u)).
\end{eqnarray*}
\end{proof}

We are now ready for the result referred to immediately before 
Lemma \ref{lem:rearrangements}. 

\begin{theorem}\label{thm:rearrangements}
If $u,v \in \P^*$ have increasing/decreasing factorizations, 
then $u \backsim v$ if and only if $u$ is a rearrangement of $v$.
\end{theorem}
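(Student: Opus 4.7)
The plan is to prove the two directions separately. The forward direction is an immediate application of Lemma~\ref{lem:rearrangements} and Theorem~\ref{thm:S}: if $u$ is a rearrangement of $v$ with both having increasing/decreasing factorizations, then $|u|=|v|=n$, $\Sigma(u)=\Sigma(v)$, and $d_i(u)+\Sigma(s_i(u))=d_i(v)+\Sigma(s_i(v))$ for all $i$, so substituting into Theorem~\ref{thm:S} yields $S(u;t,x)=S(v;t,x)$ and hence $u\backsim v$.

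For the backward direction, abbreviate $c_i(u):=d_i(u)+\Sigma(s_i(u))$, and let $D(u;t,x)$ denote the denominator polynomial in Theorem~\ref{thm:S}. From $S(u;t,x)=S(v;t,x)$, comparison of lowest $t$-degrees after cross-multiplication yields $|u|=|v|=n$ and $\Sigma(u)=\Sigma(v)=\sigma$, so $D(u;t,x)=D(v;t,x)$. Expanding $(1-x-tx)$ in the formula of Theorem~\ref{thm:S}, one computes $[t^n]D(u;t,x)=x^\sigma-x^{c_1(u)+1}$ and $[t^{n-k}]D(u;t,x)=(1-x)^k\bigl(x^{c_k(u)}-x^{c_{k+1}(u)+1}\bigr)$ for $1\leq k\leq n-1$. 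The $[t^n]$-equation immediately yields $c_1(u)=c_1(v)$, and an induction on $k$ (dividing the $[t^{n-k}]$-equation by the nonzero polynomial $(1-x)^k$ and cancelling the $x^{c_k(u)}=x^{c_k(v)}$ terms already known to agree) then gives $c_i(u)=c_i(v)$ for every $i=1,\ldots,n$.

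The final step is to identify the numerical invariants $c_i(u)$ with quantities that manifestly depend only on the multiset of values in $u$. Let $u^{\uparrow}$ be the weakly-increasing rearrangement of $u$; it trivially admits an increasing/decreasing factorization with empty decreasing part. Applying Lemma~\ref{lem:rearrangements} to the pair $u,u^{\uparrow}$ gives $c_i(u)=c_i(u^{\uparrow})=\Sigma(s_i(u^{\uparrow}))$, since $d_i(u^{\uparrow})=0$. The analogous identity holds for $v$ and $v^{\uparrow}$. Combined with $c_i(u)=c_i(v)$, this produces $\Sigma(s_i(u^{\uparrow}))=\Sigma(s_i(v^{\uparrow}))$ for every $i=1,\ldots,n-1$. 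Taking successive differences recovers the individual entries of $u^{\uparrow}$ and $v^{\uparrow}$, so $u^{\uparrow}=v^{\uparrow}$, i.e.\ $u$ and $v$ are rearrangements of each other.

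The main technical hurdle is identifying $c_i(u)$ as an intrinsic invariant of the underlying multiset of values. A direct combinatorial analysis would be case-heavy, since $d_i(u)$ depends on the relative position of the peak of $u$ versus $i$ and on the overlap of the first $i$ and last $i$ positions. The clean workaround above---sorting $u$ and invoking Lemma~\ref{lem:rearrangements}---converts $c_i(u)$ into the transparent suffix sum $\Sigma(s_i(u^{\uparrow}))$ of the sorted sequence, after which the rearrangement conclusion is immediate.
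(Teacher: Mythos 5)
Your proof is correct and follows essentially the same route as the paper's: the forward direction is the identical application of Theorem~\ref{thm:S} and Lemma~\ref{lem:rearrangements}, and the converse likewise reduces the equality of the denominators in Theorem~\ref{thm:S} to equality of the suffix sums of the sorted words. The only difference is organizational: the paper first replaces $u$ and $v$ by their weakly increasing rearrangements (using the already-proved forward direction, so that all $d_i$ vanish) and then equates denominators directly, whereas you equate denominators first, extract $c_i(u)=c_i(v)$ by a $t$-coefficient induction, and only then sort via Lemma~\ref{lem:rearrangements}; both orderings are valid.
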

\begin{proof}
Suppose $u,v \in \P^*$ have increasing/decreasing factorizations. 
If $u$ is a rearrangement of $v$, then 
$S(u;t,x) = S(v;t,x)$ by Theorem \ref{thm:S} and Lemma 
\ref{lem:rearrangements}. Hence $u \backsim v$.

For the converse, suppose $u \backsim v$.  Since we've just shown that  
a word with an increasing/decreasing factorization is Wilf equivalent to 
any rearrangement of itself with an increasing/decreasing factorization, it 
suffices to consider the case when $u$ and $v$ are both nondecreasing, and to 
show that $u = v$.  
So let $u = u_1 u_2 \cdots u_n$ and $v = v_1 v_2 \cdots v_n$ be nondecreasing.
First note that $u \backsim v$ implies $\text{wt}(u) = \text{wt}(v)$ since for 
any word $w$, the
minimum powers of $x$ and $t$ in $F(w;t,x)$ are $\Sigma(u)$ and $|u|$, 
respectively.  
So the numerators of the expressions for $S(u;t,x) = S(v; t,x)$ in Theorem~\ref{thm:S}
are equal.  
Equating the denominators,  and noting that 
 $d_i = 0$ for all 
$i$ for both $u$ and $v$, we have 
\begin{eqnarray*}
&&t^n x^{\Sigma(v)} + (1-x-tx) \sum_{i = 1}^n t^{n-i} x^{\sum_{j = i+1}^n v_j }(1-x)^{i-1}\\
&& =t^n x^{\Sigma(u)} + (1-x-tx) \sum_{i = 1}^n t^{n-i} x^{\sum_{j = i+1}^n u_j }(1-x)^{i-1}.\end{eqnarray*}
Simplifying, this becomes
$$
 \sum_{i = 1}^n t^{n-i} x^{\sum_{j = i+1}^n v_j }(1-x)^{i-1} =  \sum_{i = 1}^n t^{n-i} x^{\sum_{j = i+1}^n u_j }(1-x)^{i-1}.
$$
  Hence for each $i$, $1 \leq i \leq n$, we have
$$
 x^{\sum_{j = i+1}^m v_j } = x^{\sum_{j = i+1}^n u_j }, 
$$
and therefore $u = v$.   
\end{proof}

Since the values of  $d_i + \Sigma(s_i)$ determine equivalence for
those words with increasing/decreasing factorizations, it is natural 
to ask the same question about those that do not.  Unfortunately, 
equality 
of $d_i + \Sigma(s_i)$ for all $i$ is not enough to determine equivalence in general.  
For example, it was shown in \cite{KLRS} 
that 24153 and 24315 are not Wilf equivalent, but both have the following values:
$$
\begin{array}{c | c}
i & d_i + \Sigma(s_i) \\
\hline
1 & 13 \\
2 & 10 \\
3 & 9 \\
4 & 8
\end{array}.
$$
However, we have not found two words that are Wilf equivalent that have different values of  $d_i + \Sigma(s_i)$.   In particular, the  equivalences proved by Kitaev, Liese, Remmel, and Sagan \cite{KLRS}
 all preserve equality between the  $d_i + \Sigma(s_i)$'s.

\section{Connections with some known sequences} \label{sec:sequences}
In this section we calculate the generating functions $S(u;t,x)$, $A(u;t,x)$, and $F(u;t,x)$ for
some simple words.  Many of the coefficients for these generating functions appear in the OEIS and we will provide the sequence number in various situations.
  For example, suppose that $u$ is a word consisting of a single digit $i\geq1$.  
  Then from  Theorem~\ref{thm:S} we obtain that
$$
S(i;t,x)=\frac{tx^i}{tx^i+(1-x-tx)}
$$
and
$$
A(i;t,x) = \frac{1-x}{1-x-tx}(1-S(i;t,x)) = \frac{1}{1-t\sum_{j=1}^{i-1}x^{j}}.
$$
If we let $t=1$ and $i=3, 4\ldots$, we obtain some familiar generating functions:
\begin{eqnarray*}
A(3;1,x) &=& \frac{1}{1-x-x^2}= 1+x+2x^2+3x^3+5x^4+8x^5+13x^6+\cdots\\
A(4;1,x) &=& \frac{1}{1-x-x^2-x^3}= 1+x+2x^2+4x^3+7x^4+13x^5+24x^6+\cdots\\
A(5;1,x) &=& \frac{1}{1-x-x^2-x^3-x^4}= 1+x+2x^2+4x^3+8x^4+15x^5+29x^6+\cdots\\
A(6;1,x) &=& \frac{1}{1-x-x^2-x^3-x^4-x^5}=
1+x+2x^2+4x^3+8x^4+16x^5+31x^6+\cdots .\\
\end{eqnarray*}
The coefficients of $A(3;1,x)$, $A(4;1,x)$, $A(5;1,x)$, $A(6;1,x)$ are the Fibonacci numbers (sequence \seqnum{A000045} in OEIS), Tribonacci numbers (\seqnum{A000073}) , Tetranacci numbers (\seqnum{A000078}), and Pentanacci numbers (\seqnum{A001591}), respectively.  In general, the coefficient of $x^j$ in $A(i;1,x)$ is $F^{i-1}_{j+1}$,the $(i-1)$-step Fibonnaci number.  The $n$-step Fibonacci number is defined by $F^n_k=0$ for $k\leq0$, $F^n_1=F^n_2=1$, and all other terms by the recurrence $$F^{n}_k=\sum_{i=1}^nF^n_{k-i}.$$  This fact is easily verified by classifying words in $\A(i)$ by their last digit.  If we expand $S(i;1,x)$ as a series we obtain
\begin{eqnarray*}
S(3;1,x) &=& \frac{x^3}{1-2x+x^3}= x^3(1+2x+4x^2+7x^3+12x^4+20x^5+33x^6+\cdots)\\
S(4;1,x) &=& \frac{x^4}{1-2x+x^4}= x^4(1+2x+4x^2+8x^3+15x^4+28x^5+52x^6+\cdots)\\
S(5;1,x) &=& \frac{x^5}{1-2x+x^5}= x^5(1+2x+4x^2+8x^3+16x^4+31x^5+60x^6+\cdots)\\
S(6;1,x) &=& \frac{x^6}{1-2x+x^6}=x^6(
1+2x+4x^2+8x^3+16x^4+32x^5+63x^6+\cdots).\\
\end{eqnarray*}
The coefficients of $S(3;1,x)$, $S(4;1,x)$, $S(5;1,x)$, and $S(6;1,x)$ are partial sums of Fibonacci (\seqnum{A000071}), Tribonacci (\seqnum{A008937}), Tetranacci (\seqnum{A107066}), and Pentanacci (\seqnum{A001949}) numbers, respectively.  In fact, the coefficients of $S(i;1,x)$ are the partial sums of the $(i-1)$-step Fibonacci numbers and can be found in $i$-th column of the array defined in \seqnum{A172119}.  It is also easy to verify this fact by classifying words in $\S(i)$ by their last digit.  Lastly, if we expand $F(i;1,x)$ as a series we obtain
\begin{eqnarray*}
F(3;1,x) &=& \frac{x^3}{1-3x+x^2+2x^3}= x^3(1+3x+8x^2+19x^3+43x^4+94x^5+201x^6+\cdots)\\
F(4;1,x) &=& \frac{x^4}{1-3x+x^2+x^3+2x^4}= x^4(1+3x+8x^2+20x^3+47x^4+107x^5+\\
&&~~~~~~~~~~~~~~~~~~~~~~~~~~~~~~~~~~~~~~~238x^6+\cdots)\\
F(5;1,x) &=& \frac{x^5}{1-3x+x^2+x^3+x^4+2x^5}= x^5(1+3x+8x^2+20x^3+48x^4+\\
&&~~~~~~~~~~~~~~~~~~~~~~~~~~~~~~~~~~~~~~~~~~~~~~111x^5+251x^6+\cdots)\\
F(6;1,x) &=& \frac{x^6}{1-3x+x^2+x^3+x^4+x^5+2x^6}=
x^6(1+3x+8x^2+20x^3+48x^4+\\
&&~~~~~~~~~~~~~~~~~~~~~~~~~~~~~~~~~~~~~~~~~~~~~~~~~~112x^5+255x^6+\cdots).\\
\end{eqnarray*}
The coefficient of $x^j$ for $F(3;1,x)$, $F(4;1,x)$, $F(5;1,x)$, and $F(6;1,x)$ is $2^{j-1}$ minus $F^2_{j+1}$ (\seqnum{A008466}), $2^{j-1}$ minus $F^3_{j+1}$ (\seqnum{A050231}), $2^{j-1}$ minus $F^4_{j+1}$ (\seqnum{A050232}) and $2^{j-1}$ minus $F^5_{j+1}$ (\seqnum{A050233}) respectively.  In general, the coefficient of $x^j$ in $F(i;1,x)$ is $2^{j-1}$ minus $F^{i-1}_{j+1}$.  This fact is also easily verified as the total number of words of weight $j$ is simply $2^{j-1}$ (the number of compositions of $j$) and we can subtract off the words that avoid $u$ which has already been shown to be an $(i-1)$-step Fibonacci number to obtain the number of words that embed $u$.

We now turn to a different class of words.
Suppose that $u = r~r+s~r$ where $r,s \geq 1$.  Then in
the notation of Theorem 1, $s_1 = r+s~r$, $s_2 =r$ and $s_3 = \epsilon$.
To compute $d_1(u)$ and $d_2(u)$, consider the arrays
$$
\begin{array}{cccccccccccc}
r&r+s&r&   & & & &  r&r+s&r& \\
 &   &r&r+s&r& & &   &r&r+s&r.
\end{array}
$$

It is easy to see from these arrays that
$d_1(u) =0$ and $d_s(u) = s$. Thus $d_1(u)+\Sigma(s_1) = 2r+s$ and
$d_2(u)+\Sigma(s_2) = r+s$. By definition $d_3(u) = 0$ so that
$d_3(u)+\Sigma(s_3) = 0$.
Thus by Theorem~\ref{thm:S}
$$
S(r~r+s~r;t,x) = \frac{t^3x^{3r+s}}{t^3x^{3r+s} + (1-x-xt)(t^2x^{2r+s} +
tx^{r+s}(1-x) + (1-x)^2)}
$$
and
$$
A(r~r+s~r;t,x) = \frac{1-x}{1-x-xt}(1-S(r~r+s~r;t,x)).
$$
Now when $t=1$ and $r=1$, these simplify to
\begin{eqnarray*}
S(1~1+s~1;1,x) &=&  \frac{x^{3 +s}}{(1-x)^3(1- \sum_{i=1}^s x^i)} \\
A(1~1+s~1;1,x) &=& \frac{1-2x +x^2 +x^{s+1}}{(1-x)^3(1- \sum_{i=1}^s x^i)}.
\end{eqnarray*}
We can expand these functions as power series around $x=0$ and find that
\begin{eqnarray*}
S(121;1,x) &=& x^4+4 x^5+10 x^6+20 x^7+35 x^8+56 x^9+84 x^{10}+120 x^{11}+165 x^{12}+\\
&&220 x^{13}+286 x^{14}+364 x^{15}+455 x^{16}+560 x^{17}+680
x^{18}+\cdots,\\
S(131;1,x) &=& x^5+4 x^6+11 x^7+25 x^8+51 x^9+97 x^{10}+
176 x^{11}+309 x^{12}+ 530 x^{13}+\\
&&894 x^{14}+1490 x^{15}+2462 x^{16}+4043
x^{17}+6610 x^{18}+\cdots,\\
S(141;1,x) &=& x^6+4 x^7+11 x^8+26 x^9+56 x^{10}+114 x^{11}+224 x^{12}+430 x^{13}+813 x^{14}+\\
&&1522 x^{15}+2831
x^{16}+5244 x^{17}+9688 x^{18}+\cdots,
\mbox{and} \\
S(151;1,x) &=& x^7+4 x^8+11 x^9+26 x^{10}+57 x^{11}+119 x^{12}+241 x^{13}+479 x^{14}+941 x^{15}+\\
&&1835x^{16}+3562 x^{17}+6895 x^{18}+\cdots.
\end{eqnarray*}
Now, the sequence $1,4,10,20,35,56, \ldots$ of coefficients starting at $x^4$ for
$S(121;1,x)$ 
is the sequence of Tetrahedral numbers, defined by $a(n) = \frac{n(n+1)(n+2)}{6}$
(\seqnum{A000292}). Thus we obtain a new combinatorial
interpretation of these numbers. That is, $a(n)$ equals
the number of words $u$ such that $\sum (u) = 3+n$ and
$u \in \mathcal{S}(121)$.
Similarly the sequence $1,4,11,25,51,97, \ldots$ of coefficients starting at $x^5$ for
$S(131;1,x)$  appears in the OEIS as 
sequence \seqnum{A014162}. In this case, with an off-set of 4,
these numbers $b(n)$ count the number of 132-avoiding two-stack stortable
permutations which contain exactly one subsequence of type
51234. See Egge and Mansour \cite{EM}.
Again, we obtain a new combinatorial
interpretation of these numbers. That is, $b(n)$ equals
the number of words $u$ such that $\sum (u) = 4+n$ and
$u \in \mathcal{S}(131)$. The sequences of
coefficients for $S(141,1,x)$ and $S(151,1,x)$ have not
previously appeared in the OEIS.

Similarly, one can expand $A(1~s+1~1;1,x)$ as a power series
about $x=0$ and
find that
\begin{eqnarray*}
A(121;1,x) &=& 1+x+2 x^2+4 x^3+7 x^4+11 x^5+16 x^6+22 x^7+29 x^8+37 x^9+46 x^{10}+\\
&&56 x^{11}+67 x^{12}+79 x^{13}+92 x^{14}+106 x^{15}+\cdots,\\
A(131;1,x) &=& 1+x+2 x^2+4 x^3+8 x^4+15 x^5+27 x^6+47 x^7+80 x^8+134 x^9+222 x^{10}+\\
&&365 x^{11}+597 x^{12}+973
x^{13}+1582 x^{14}+2568 x^{15}+\cdots,\\
A(141;1,x) &=& 1+x+2 x^2+4 x^3+8 x^4+16 x^5+31 x^6+59 x^7+111
x^8+207 x^9+384 x^{10}+\\
&&710 x^{11}+1310 x^{12}+ 2414 x^{13}+4445 x^{14}+8181 x^{15}+\cdots,\ \mbox{and}\\
A(151;1,x) &=& 1+x+2
x^2+4 x^3+8 x^4+16 x^5+32 x^6+63 x^7+123 x^8+239 x^9+463 x^{10}+\\
&&895 x^{11}+1728 x^{12}+3334 x^{13}+6430 x^{14}+12398 x^{15}+\cdots.
\end{eqnarray*}
The coefficient of $x^n$ in
$A(121;1,x)$ is $a(n) = \frac{n(n-1)}{2}+1$.  These numbers are 
the central polygonal numbers (\seqnum{A000124}) and $a(n+1)$ is the maximal number of pieces
obtained when slicing a pancake with $n$ cuts. Thus we obtain
a new combinatorial interpretation of these numbers as
the number of words of $u$ such that $\sum (u) =n$ and $u$ does
not embed 121. The sequence of coefficients in the expansion
of $A(131;1,x)$ starting at $x$ is $1,2,4,8,15,27,47, \ldots$
(\seqnum{A000126}) and counts the number of
ternary numbers with no 0 digit and at least one 2 digit.
Thus we obtain
a new combinatorial interpretation of these numbers as
the number of words of $u$ such that $\sum (u) =n$ and $u$ does
not embed 131. The sequence of coefficients in the expansion
of $A(141;1,x)$ starting at $x$ is $1,2,4,8,16,31,59, \ldots$
(\seqnum{A007800}) is said to have come from a problem
in AI planning and satisfies a recurrence
$a(n) = 4+a(n-1)+a(n-2)+a(n-3)+a(n-4)-a(n-5) -a(n-6) -a(n-7)$ for
$n > 7$.  Thus we obtain
a new combinatorial interpretation of these numbers as
the number of words of $u$ such that $\sum (u) =n$ and $u$ does
not embed 141. The sequence of coefficients in the expansion
of $A(151;1,x)$ starting at $x$ is $1,2,4,8,16,32,63,123, \ldots$
(\seqnum{A145112}) and counts the number of binary word with
fewer that four 0 digits between any pair of consecutive 1 digits.
Thus we obtain
a new combinatorial interpretation of these numbers as
the number of words of $u$ such that $\sum (u) =n$ and $u$ does
not embed 151. One can also check that the
sequence of coefficients in the expansion
of $A(161;1,x)$ starting at $x$ is sequence \seqnum{A145113} and counts the number of binary word with
fewer that five 0 digits between any pair of consecutive 1 digits.

Another simple example is $S(123;t,x)$. In this case
it is easy to see that $d_1(123) = d_2(123) = d_3(123) =0$.
Thus it follows that
\begin{eqnarray*}
S(123;t,x) &=&
\frac{t^3x^6}{t^3x^6+(1-x-xt)(t^2x^5+tx^3(1-x)+(1-x)^2)} \ \mbox{and} \\
A(123;t,x) &=& \frac{1-x}{1-x-xt}(1 -S(123;t,x)).
\end{eqnarray*}
One can compute that
\begin{eqnarray*}
S(123;t,x) &=&
\frac{x^6}{(1-x)^2(x^4-x^3+2x-1)} \ \mbox{and} \\
A(123;t,x) &=& \frac{1-2x+x^2+x^3-x^4+x^5}{(1-x)^2(x^4-x^3+2x-1)}.
\end{eqnarray*}
Expanding these functions as power series  about $x=0$ and letting $t=1$, we  obtain
that
\begin{eqnarray*}
S(123;1,x) &=& x^6+4x^7+11x^8+25x^9+52x^{10}+103x^{11} +199x^{12}+\cdots
 \mbox{and} \\
A(123;1,x) &=& 1+x+2 x^2+4 x^3+8 x^4+16 x^5+31 x^6+59 x^7+111 x^8+\\
&&208 x^9+389 x^{10}+727 x^{11}+1358 x^{12}+\cdots.
\end{eqnarray*}
In this case, neither sequence of coefficients have appeared in
the OEIS.

\section{Wilf equivalence for words of length 3} \label{sec:3}

We now turn to the classification of the Wilf equivalence classes for all words of length
3 in $\mathcal{P}_1 = (\mathbb{P}, \leq)$.    Theorem~\ref{thm:S} will provide the necessary information 
for words with increasing/decreasing factorizations. 
The only words of length 3 without increasing/decreasing 
factorizations are words of the form $bac$ or $cab$ where $a < b \leq c$.
But Kitaev, Liese, Remmel, and Sagan (Lemma 4.1 of \cite{KLRS}) show that 
any word is Wilf equivalent to its reverse, so it suffices to consider $bac$.  To that end, we 
 give an explicit formula 
for $S(bac;t,x)$ where  $a < b \leq c$.

\begin{theorem}\label{thm:bac}  For positive integers $a < b \leq c$, 
$$
S(bac;t,x) = \frac{t^3 x^{a+b+c}  (1 + t x^c(1 + x + \cdots + x^{b-a-1}))}
{(1-x-tx)\psi_{a,b,c}(t,x) + t^3 x^{a+b+c}  (1 + t x^c(1 + x + \cdots + x^{b-a-1})) }
$$
where 
$$
\psi_{a,b,c}(t,x) = (1-x)^2 + t x^c(1-x) + t^2 x^{a+c} + t^3 x^{a+2c}(1 + x + \cdots +
x^{b-a-1}).
$$
\end{theorem}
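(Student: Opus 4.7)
The plan is to mimic the inclusion-exclusion argument in the proof of Theorem~\ref{thm:S}: since $u=bac$ has length $n=3$, the decomposition $\A(u)\W(u) = \S(u)\sqcup \S^{(1)}(u)\sqcup \S^{(2)}(u)$ gives
\[
S(u;t,x) \;=\; A(u;t,x)\,W(u;t,x) \;-\; S^{(1)}(u;t,x)\;-\;S^{(2)}(u;t,x),
\]
and the task is to express $S^{(1)}$ and $S^{(2)}$ in terms of $S(u;t,x)$. For $\S^{(1)}(u)$ the leftmost and suffix embeddings share a single position, where the leftmost contributes the constraint $\ge c$ (from $u_3$) and the suffix contributes $\ge b$ (from $u_1$); since $b\le c$, the joint constraint $\ge c$ is already forced by $\bar w\in\S(u)$. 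The map $(\bar w,y,z)\mapsto \bar w\,y\,z$ from $\S(u)\times\{y\ge a\}\times\{z\ge c\}$ to $\S^{(1)}(u)$ is therefore a bijection and yields $S^{(1)}(u;t,x) = S(u;t,x)\cdot\frac{t^2 x^{a+c}}{(1-x)^2}$.

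The new content is the computation of $S^{(2)}(u;t,x)$, where the shift-by-$(b-a)$ argument of Lemma~\ref{lem:S} fails because $bac$ is not an increasing/decreasing word. I would instead observe that in $\S^{(2)}(u)$ the two overlapping embeddings sharing two positions are equivalent to requiring that $\bar w'=w_1\cdots w_{m-1}$ lies in $\S(u)$ with $\bar w'_{|\bar w'|-1}\ge b$ and $w_m\ge c$. Writing $S^{[b]}(u;t,x)$ for the generating function of words in $\S(u)$ whose second-from-last character is $\ge b$, this gives
\[
S^{(2)}(u;t,x) \;=\; S^{[b]}(u;t,x)\cdot\frac{tx^c}{1-x}.
\]
To handle $S^{[b]}$, I would split $\S(u)$ by whether $v_{|v|-1}\in[a,b-1]$, in which case the only potential embedding involving $v_{|v|-1}$ is vacuously avoided since $v_{|v|-1}<c$, or $v_{|v|-1}\ge b$. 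The former sub-case is in bijection with triples $(v',k,z)$ where $v'\in\A(u)$ ends in a character $\ge b$, $k\in[a,b-1]$, and $z\ge c$, so
\[
S(u;t,x)\;-\;S^{[b]}(u;t,x) \;=\; A^{\ge b}(u;t,x)\cdot\frac{t^2 x^{a+c}\,G}{1-x},\qquad G \;=\; 1+x+\cdots+x^{b-a-1},
\]
where $A^{\ge b}(u;t,x)$ denotes the generating function for words in $\A(u)$ whose last character is $\ge b$.

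Two auxiliary identities close the system. First, the map $v\mapsto (v_1\cdots v_{|v|-1},v_{|v|})$ is a bijection between $\S(u)$ and pairs $(v'',k)$ with $v''\in\A(u)$ ending in characters $\ge b,\ge a$ and $k\ge c$, which translates to $A^{\ge b,\ge a}(u;t,x) = (1-x)S(u;t,x)/(tx^c)$. Second, classifying extensions $v'k$ of $v'\in\A(u)$ by characters $k\ge b$ into the sub-cases $k\in[b,c-1]$ (where $v'$ is unrestricted) and $k\ge c$ (which requires $v'\notin \A^{\ge b,\ge a}(u)$), then invoking the telescoping simplification $tx^b\cdot\frac{1-x^{c-b}}{1-x}+\frac{tx^c}{1-x}=\frac{tx^b}{1-x}$, yields $A^{\ge b}(u;t,x) = \frac{tx^b}{1-x}A(u;t,x)-S(u;t,x)$. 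Substituting these identities along with the expressions for $S^{(1)}$, $S^{(2)}$, and $A(u;t,x) = (1-x)(1-S(u;t,x))/(1-x-tx)$ into the inclusion-exclusion identity and multiplying through by $(1-x)^2$ collects the surviving terms into exactly $\psi_{a,b,c}(t,x)$ and the numerator $t^3 x^{a+b+c}(1+tx^c G)$, proving the theorem. The main obstacle is the $S^{(2)}$ computation; the case split by $v_{|v|-1}$ is precisely what produces the geometric factor $G$ in both the numerator and denominator of the claimed formula.
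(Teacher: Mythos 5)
Your proposal is correct and follows essentially the same route as the paper: the same decomposition $\A(u)\W(u)=\S(u)\sqcup\S^{(1)}(u)\sqcup\S^{(2)}(u)$, the same evaluation $S^{(1)}(u;t,x)=S(u;t,x)\,t^2x^{a+c}/(1-x)^2$, and a treatment of the two-position overlap which, once your identities for $S^{[b]}$, $A^{\ge b}$ and $A^{\ge b,\ge a}$ are substituted, reproduces term-for-term the paper's five-term equation (with $G=1+x+\cdots+x^{b-a-1}$, your correction $A^{\ge b}\cdot t^2x^{a+c}G/(1-x)$, after the final factor $tx^c/(1-x)$ is appended, is exactly the paper's pair of terms $A\cdot t^4x^{b+2c}(x^a+\cdots+x^{b-1})/(1-x)^3$ and $-S\cdot t^3x^{2c}(x^a+\cdots+x^{b-1})/(1-x)^2$). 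The paper organizes that same correction as a direct inclusion--exclusion on explicit word classes rather than via auxiliary restricted generating functions, but the underlying computation and the resulting linear equation for $S(bac;t,x)$ are identical.
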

\begin{proof}   We start with the following expression, which follows from (\ref{eq:S2}) and Lemma~\ref{lem:S}, 
with extra terms to account for the fact that $bac$ does not have an increasing/decreasing 
factorization:
\begin{eqnarray*}
S(bac;t,x) & = & A(bac;t,x) \: t^3 \frac{x^{a+b+c}}{(1-x)^3} - S(bac;t,x)\: t^2 \frac{x^{a+c}}{(1-x)^2} 
-S(bac;t,x)\: t \frac{x^c}{1-x}\\
&& + A(bac;t,x) \: t^4 \frac{x^{b+2c}}{(1-x)^3} (x^a + \cdots + x^{b-1})\\
&&- S(bac;t,x) \: t^3 \frac{x^{2c}}{(1-x)^2} (x^a + \cdots + x^{b-1}).\\
\end{eqnarray*}
The first term, 
$$
A(bac;t,x) \: t^3 \frac{x^{a+b+c}}{(1-x)^3},
$$
is the generating function for words consisting of an embedding of $bac$ appended to 
a word that avoids $bac$.  This includes the words in $\S(bac)$, but also includes 
words that end in overlapping embeddings of $bac$, either in the last four or five characters 
(and do not embed $bac$ prior to those embeddings).  So we need to remove
the terms associated with these words. 
First consider words $w$ that end in overlapping embeddings in the last five characters, as shown:
$$
\begin{array}{rlllllll}
w= & \cdots &  b^+  & a^+  & c^+ &  a^+  &  c^+ \\
\hline
 && b  & a  & c &    & \\
      &  &   &     &  b & a & c 
\end{array},
$$
where for a positive integer $m$, $m^+$ represents any integer greater than or equal
to $m$. 
Since $b \leq c$, these words can be formed by appending an embedding of $ac$ to words in $\S(bac)$.  
So the second term on the right hand side, 
$$
S(bac;t,x)\: t^2 \frac{x^{a+c}}{(1-x)^2},
$$
accounts for these words. 

Now consider words that end in overlapping embeddings in the final four characters only:
$$
\begin{array}{rlllllll}
w= & \cdots  & b^+  & b^+ &  c^+  &  c^+ \\
\hline
 && b  & a  & c &    & \\
        &   &     &  b & a & c 
\end{array}.
$$
The third term, 
$$
S(bac;t,x)\: t \frac{x^c}{1-x},
$$
removes the terms associated with these words by appending an embedding of $c$ to words
in $\S(bac)$.  However, because $a < b$, this 
 also includes terms associated with 
words of the following form:
$$
\begin{array}{rlllllll}
w= & \cdots  & b^+  & [a,b) &  c^+  &  c^+ \\
\hline
 && b  & a  & c &    & \\
        &   &     &  b & a & c 
\end{array},
$$
that is, words that first embed $bac$ starting at the fourth character
from the end, and end in an embedding of $bacc$ but not $bbcc$.  
To correct for these terms, consider the fourth term on the right hand side:
$$
A(bac;t,x) \: t^4 \frac{x^{b+2c}}{(1-x)^3} (x^a + \cdots + x^{b-1}).
$$
This is the generating function for those words that end in an embedding
of $bacc$ but not $bbcc$ (hence the $x^a + \cdots + x^{b-1}$ term), 
appended to words that avoid $bac$.  So this includes the 
words that we want, but also  may include words that first 
embed $bac$ beginning at the fifth or sixth character from the end.
However, the first of these situations is impossible, as shown,
$$
\begin{array}{rlllllll}
w= & \cdots & \bullet & b^+  & [a,b) &  c^+  &  c^+ \\
\hline
&& b & a & c \\
 &&& b  & a  & c &    & \\
      &  &   &     &  b & a & c 
\end{array},
$$
since a character in $[a,b)$ cannot embed $c$.   The final term, 
$$
S(bac;t,x) \: t^3 \frac{x^{2c}}{(1-x)^2} (x^a + \cdots + x^{b-1}),
$$
accounts for the second possibility,
$$
\begin{array}{rlllllllll}
w= & \cdots & b^+ & a^+ & c^+  & [a,b) &  c^+  &  c^+ \\
\hline
&& b & a & c \\
 &&&& b  & a  & c &    & \\
      &  &&   &     &  b & a & c 
\end{array},
$$
by appending an embedding of $acc$, whose first character is in $[a,b)$, to 
words in $\S(bac)$.  

We can now solve for $S(bac;t,x)$:
\begin{eqnarray*}
 S(bac;t,x) & =&  \frac{t^3 \frac{x^{a+b+c}}{(1-x)^3} + t^4 \frac{x^{b+2c}}{(1-x)^3} (x^a + \cdots + x^{b-1})}{1+ t \frac{x^c}{1-x} + t^2 \frac{x^{a+c}}{(1-x)^2} + t^3 \frac{x^{2c}}{(1-x)^2} (x^a + \cdots + x^{b-1})} A(bac;t,x) \\ \\
& = & \frac{t^3 x^{a+b+c} + t^4 x^{b+2c}(x^a + \cdots + x^{b-1})}
{(1-x)^3 + t x^c (1-x)^2 + t^2 x^{a+c} (1-x) + t^3 x^{2c} (1-x) (x^a + \cdots + x^{b-1})} \\\\
&& \cdot  A(bac;t,x).
\end{eqnarray*}
Substituting $A(bac;t,x)=\frac{1-x}{1-x-tx} (1-S(bac;t,x))$, we obtain
\begin{eqnarray*}
S(bac;t,x) & =&\frac{t^3 x^{a+b+c} + t^4 x^{b+2c}(x^a + \cdots + x^{b-1})}
{(1-x)^2 + t x^c (1-x) + t^2 x^{a+c}+ t^3 x^{2c} (x^a + \cdots + x^{b-1})}\\
&&  \cdot \frac{1}{1-x-tx} (1-S(bac;t,x)).
\end{eqnarray*}
Solving for $S(bac;t,x)$ and factoring appropriate terms gives the result. 
\end{proof} 

As a corollary, we can now classify Wilf equivalence for words of the form $bac$ with $a < b \leq c$.


\begin{corollary}\label{cor:bac}
For positive integers $a < b \leq c$, the only words  
 Wilf equivalent to $bac$ are $bac$ and
$cab$.  
\end{corollary}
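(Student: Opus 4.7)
\medskip

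\noindent\textbf{Proof Plan.}
Lemma 4.1 of \cite{KLRS} asserts that every word is Wilf equivalent to its reversal, and the reversal of $bac$ is $cab$; this immediately gives $bac \backsim cab$ (these coincide in the degenerate case $b=c$). For the converse, suppose $v \in \P^*$ satisfies $v \backsim bac$. The unique lowest-weight monomial of $S(u;t,x)$ is $t^{|u|} x^{\Sigma(u)}$, arising from $u$ itself as an element of $\S(u)$, so we must have $|v|=3$ and $\Sigma(v)=a+b+c$. I would then split the argument into two subcases according to whether $v$ has an increasing/decreasing factorization.

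First, suppose $v$ has an i/d factorization. By Theorem \ref{thm:rearrangements}, $v$ is Wilf equivalent to every rearrangement of $v$ that also has an i/d factorization; in particular $v$ is equivalent to its nondecreasing rearrangement, which I denote $v^\uparrow$. Hence $v^\uparrow \backsim bac$, and I would compute $S(v^\uparrow;t,x)$ via Theorem \ref{thm:S} (where every $d_i$ vanishes) and compare it, as a rational function, with the formula of Theorem \ref{thm:bac}. The goal is to exhibit a power-series coefficient on which the two disagree. A natural invariant to test is the coefficient of $t^4$: a brief inclusion--exclusion on the two possible embedding positions in a length-4 word gives
\[
[t^4]\, S(u;t,x) \;=\; \frac{x^{\Sigma(u)+1}-x^{M(u)}}{(1-x)^4},
\]
where $M(u):=u_1+\max(u_1,u_2)+\max(u_2,u_3)+u_3$; one checks that $M(bac)=2b+2c$. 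This eliminates most candidates $v^\uparrow$, and the residual ones (those matching both $\Sigma$ and $M$) would be eliminated by passing to $[t^5]$ or by the more structural observation that the denominator of $S(bac;t,x)$ carries the overlap-correction monomials $t^2x^{a+c}$ and $t^3x^{a+2c}(1+x+\cdots+x^{b-a-1})$ that cannot appear in the denominator produced by Theorem \ref{thm:S}.

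Next, suppose $v$ has no i/d factorization. Then $v$ is of the form $\beta\alpha\gamma$ or $\gamma\alpha\beta$ for some $\alpha<\beta\leq\gamma$, and since $v$ is Wilf equivalent to its reversal I may assume $v=\beta\alpha\gamma$. Applying Theorem \ref{thm:bac} to both sides of $S(bac;t,x)=S(\beta\alpha\gamma;t,x)$, I would match corresponding monomials in the denominator polynomials $\psi_{a,b,c}$ and $\psi_{\alpha,\beta,\gamma}$: the $tx^c(1-x)$ term identifies $c=\gamma$, the $t^2x^{a+c}$ term then identifies $a=\alpha$, and $b=\beta$ follows from $a+b+c=\alpha+\beta+\gamma$.

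The main obstacle is the first subcase: two rational functions with superficially different presentations can in principle agree after cancellation, so the proof must pinpoint a concrete distinguishing feature. The cleanest route is the power-series coefficient comparison sketched above, supplemented, if necessary, by a direct cross-multiplication verifying that the formula of Theorem \ref{thm:bac} is already in lowest terms whenever $a<b\leq c$.
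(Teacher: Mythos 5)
Your overall architecture matches the paper's: reversal gives $bac \backsim cab$; the minimal-weight monomial pins down $|v|=3$ and $\Sigma(v)=a+b+c$; words with increasing/decreasing factorizations are handled separately from words of the form $\beta\alpha\gamma$ with $\alpha<\beta\leq\gamma$; and the latter are disposed of by cross-multiplying the two instances of Theorem~\ref{thm:bac} and comparing coefficients of powers of $t$. (The paper uses the $t^2$ coefficient and then the extremal powers of $x$; your $t^1$ comparison actually yields $c+b-a=\gamma+\beta-\alpha$ rather than $c=\gamma$ directly, but combined with $a+b+c=\alpha+\beta+\gamma$ it gives $a=\alpha$ first and the rest follows, so this is only a bookkeeping slip.)

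The genuine gap is in the increasing/decreasing case. Your $[t^4]$ invariant $M(u)$ is computed correctly, but, as you concede, it leaves residual candidates --- for instance the nondecreasing word $a\,a\,(b+c-a)$ matches both $\Sigma$ and $M$ --- and none of your three proposed ways of finishing is actually carried out. Moreover, the ``structural observation'' that $t^2x^{a+c}$ and $t^3x^{a+2c}(1+\cdots+x^{b-a-1})$ ``cannot appear in the denominator produced by Theorem~\ref{thm:S}'' is not a valid argument: equality of rational functions says nothing about which monomials appear in two particular, possibly non-reduced, presentations of their denominators. The sound route --- and the one the paper takes --- is your parenthetical fallback: show that the factor $\phi=1+tx^c(1+x+\cdots+x^{b-a-1})$ of the numerator in Theorem~\ref{thm:bac} is irreducible (it has $t$-degree $1$ and constant term $1$) and does not divide the denominator; since $\phi$ does not divide $1-x-tx$, this reduces to checking $\phi\nmid\psi_{a,b,c}$, and comparing $t$-coefficients of a putative factorization $\psi_{a,b,c}=\phi\cdot r$ forces $a=b$, a contradiction. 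Hence $S(bac;t,x)$ in lowest terms does not have a monomial numerator, while Theorem~\ref{thm:S} forces a monomial numerator for every word with an increasing/decreasing factorization. Until that divisibility computation is done explicitly, the increasing/decreasing case of your argument is open.
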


\begin{proof}   As noted previously, $bac \backsim cab $ since they are reverses of each other.  So it remains
to show that no other words of length three are Wilf equivalent to these two.
First, note that $1 + t x^c (1+x + \cdots + x^{b-a-1})$ does not 
divide the denominator of the expression for $S(bac;t,x)$ in 
Theorem~\ref{thm:bac}
since, for example, it doesn't divide it when $x=1$.  So $S(bac;t,x)$ does not have 
a single monomial in the numerator, and therefore is not equal to $S(u;t,x)$ for any $u$ that
 has an increasing/decreasing factorization by Theorem~\ref{thm:S}.  So 
 suppose $bac \backsim b'a'c'$ with $a' < b' \leq c'$.   We'll show that $a = a'$, $b=b'$
 and $c = c'$.

  Equating $S(bac;t,x)$ and 
 $S(b'a'c';t,x)$ from Theorem~\ref{thm:bac}, we have
 \begin{eqnarray*}
&&\phi_{a,b,c}(t,x) \left[ (1-x-tx)\psi_{(a',b',c')}(t,x) + \phi_{a',b',c'}(t,x)\right]\\
&&  = \phi_{a',b',c'}(t,x) \left[ (1-x-tx)\psi_{(a,b,c)}(t,x) + \phi_{a,b,c}(t,x)\right],
  \end{eqnarray*}
  where $\phi_{a,b,c}(t,x) =  t^3 x^{a+b+c}(1+tx^c(1+x+ \cdots + x^{b-a-1}))$, 
 and similarly for $\phi_{a',b',c'}(t,x)$.
Since $bac \backsim b'a'c'$, we have $a+b+c = a' + b' + c'$, so we may simplify to
 \begin{eqnarray*}
&& (1+tx^c(1+x+ \cdots + x^{b-a-1}))\psi_{(a',b',c')}(t,x) \\ 
&& = (1+tx^{c'}(1+x+ \cdots + x^{b'-a'-1}))\psi_{(a,b,c)}(t,x).
 \end{eqnarray*}
Recalling that 
 $$ \psi_{(a,b,c)}(t,x) = (1-x)^2 + t x^c(1-x) + t^2 x^{a+c} + t^3 x^{2c}(x^a + \cdots +
x^{b-1}), $$
and equating powers of $t^2$ on both sides, we have
$$
x^{a'+c'}+x^{c+c'}(1-x)(1+x+ \cdots + x^{b-a-1}) = 
x^{a+c}+x^{c+c'}(1-x)(1+x+ \cdots + x^{b'-a'-1}) 
$$
Since $a<c$, the smallest power of $x$ on the left is $a'+c'$, and the smallest on the right is
$a+c$.  So 
$$
a'+c'= a+c.
$$
Since we know $a+b+c = a' + b' + c'$, this gives $b = b'$. 
Now equating the largest powers of $x$, we have
$$
c+c' +b-a = c+c'+b'-a',
$$
which gives $a = a'$, and therefore $c = c'$. 
\end{proof}

We're now ready to completely classify Wilf equivalence of words of length 3.

\begin{theorem} \label{thm:length3}  Wilf equivalence relative to $\mathcal{P}_1$ partitions
$\P^3$ into the following equivalence classes.  

\begin{enumerate}

\item $\{aaa\}$
 for any $a \in \P$  

\item $\{aab, aba, baa\}$  if $a <  b$

\item $\{aab, baa\}$ and $\{aba\}$  if $a > b$

\item $\{bac, cab\}$  and $\{abc, acb, cba, bca\}$ if $a < b < c$.

\end{enumerate}

\end{theorem}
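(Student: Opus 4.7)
The plan is to perform a structured case analysis on length-3 words, grouping them by the multiset of letters they contain, and then separating arrangements inside each group according to whether the word admits an increasing/decreasing factorization. Since Wilf equivalence forces equality of length and of the total letter sum (these are read off from the minimum-weight monomial of $F(u;t,x)$), only words with the same multiset of letters can be Wilf equivalent, so the problem reduces to analyzing rearrangements within each fixed multiset: (i) $\{a,a,a\}$, (ii) $\{a,a,b\}$ with $a\ne b$, and (iii) $\{a,b,c\}$ with $a<b<c$.

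Next I would identify, by a direct check, exactly which length-3 words fail to have an increasing/decreasing factorization. Writing $u=u_1u_2u_3$, such a factorization exists unless $u_1>u_2$ and $u_2<u_3$, so the only exceptional shapes are $aba$ with $a>b$, together with $bac$ and $cab$ for $a<b<c$; every other length-3 word has an increasing/decreasing factorization. For the non-exceptional words, Theorem~\ref{thm:rearrangements} applies and says that two such words are Wilf equivalent iff they are rearrangements of one another. Applied multiset by multiset this yields: the singleton class $\{aaa\}$; the class $\{aab,aba,baa\}$ when $a<b$ (all three arrangements have increasing/decreasing factorizations); the class $\{aab,baa\}$ when $a>b$ (since $aba$ is now exceptional); and the class $\{abc,acb,bca,cba\}$ when $a<b<c$ (since $bac$ and $cab$ are exceptional).

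To finish, I would place the three exceptional shapes using Corollary~\ref{cor:bac}. The word $aba$ with $a>b$ fits the form $b'a'c'$ of the corollary with $a'=b$, $b'=c'=a$ (so $a'<b'\le c'$), and its reverse is again $aba$; hence $\{aba\}$ is its own class. Similarly $bac$ and $cab$ with $a<b<c$ fit the corollary directly and are reverses of one another, so $\{bac,cab\}$ is a class. The same corollary also guarantees that no exceptional word can be Wilf equivalent to any word with an increasing/decreasing factorization, because, as observed in its proof, $S(bac;t,x)$ does not have a monomial numerator while Theorem~\ref{thm:S} forces $S$ to have such a numerator for words with an increasing/decreasing factorization. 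Thus the four families listed above are pairwise disjoint and cover all of $\P^3$, giving exactly the classes in the theorem.

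There is no genuine obstacle here: Theorem~\ref{thm:rearrangements} resolves all equivalences among words with an increasing/decreasing factorization, Corollary~\ref{cor:bac} pins down the equivalences for the three exceptional shapes, and the length/weight invariant of $F(u;t,x)$ prevents any cross-multiset equivalences. The only point demanding care is the case-checking that distinguishes $a<b$ from $a>b$ in multiset (ii) (which determines whether $aba$ is exceptional) and the verification that the exceptional classes do not merge with any inc/dec class, and both of these are immediate from the results already proved.
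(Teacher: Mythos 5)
Your proof is correct and follows essentially the same route as the paper: Theorem~\ref{thm:rearrangements} settles every word with an increasing/decreasing factorization, and Corollary~\ref{cor:bac} settles the three exceptional shapes $aba$ (with $a>b$), $bac$, and $cab$. One caveat: your opening inference that equal length and equal letter sum force equal multisets is false (e.g.\ $114$ and $123$ share both invariants), so the stated ``reduction to a fixed multiset'' is not justified as written; fortunately the claim is never load-bearing, since the cross-multiset non-equivalences you need are already supplied by the ``only if'' direction of Theorem~\ref{thm:rearrangements} for inc/dec words and by Corollary~\ref{cor:bac} for the exceptional ones, so the argument stands with that sentence deleted.
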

\begin{proof}  Theorem~\ref{thm:rearrangements} establishes the equivalence among words in the sets $\{aab,aba,baa\}$  in case 2, the sets $\{aab, baa\}$ in 
case 3, and $\{abc, acb, cba, bca\}$ in case 4, as well as the fact that these sets are all in distince Wilf equivalences classes.
The remaining cases,   $\{bac, cab\}$ in case 4 and $\{aba\}$ in case 3,  follow from Corollary~\ref{cor:bac}. 
\end{proof}

Note that when we consider the permutations of $S_3$, there are only 
two Wilf equivalence classes, namely, $\{123,132,321,231\}$ and 
$\{213,312\}$. We computed $S(123;t,x)$ and $A(123;t,x)$
in the previous section. Thus to complete the possibilities 
for $S(\sg;t,x)$ for $\sg \in S_3$, we need only compute 
$S(213;t,x)$ and $A(213;t,x)$. In this case, we must use 
Theorem \ref{thm:bac} from which we obtain that  
\begin{eqnarray*}
S(213;t,x)&=& \frac{t^3x^6(1+tx^3)}{(1-x-xt)((1-x)^2 +tx^3(1-x)+t^2x^4+t^3x^7)+t^3x^6(1+tx^3)} \ \mbox{and} \\
A(213;t,x)&=& \frac{1-x}{1-x-xt}(1-S(213;t,x)).
\end{eqnarray*}
One can compute that 
\begin{eqnarray*}
S(213;1,x)&=& \frac{x^6+x^8}{1-4x+5x^2-x^3-2x^4+2x^6+x^7-2x^8} \ \mbox{and}\\
A(213;1,x)&=& \frac{(1-x)(1-4x+5x^2-x^3-2x^4+x^6+x^7-3x^8)}
{(1-2x)(1-4x+5x^2-x^3-2x^4+2x^6+x^7-2x^8)}.
\end{eqnarray*}
In this case, if one expands these functions as power series 
 about $x=0$, one obtains 
\begin{eqnarray*}
S(213;1,x)&=& x^6+4 x^7+11 x^8+26 x^9+55 x^{10}+109 x^{11}+207 x^{12}+381 x^{13}+\\
&&684 x^{14}+1201 x^{15}+O[x]^{16} \ \mbox{and}\\
A(213;1,x)&=& 1+x+2 x^2+4 x^3+8 x^4+16 x^5+31 x^6+59 x^7+111 x^8+207 x^9+\\
&&385 x^{10}+716 x^{11}+1334 x^{12}+2494 x^{13}+4685 x^{14}+8853 x^{15}
+O[x]^{16}.
\end{eqnarray*}
However, neither of the two sequence of coefficients have appeared 
in the OEIS.

\section{The strong rearrangement conjecture}  \label{sec:rearrangement}

In this section we discuss the strong rearrangement 
conjecture and its connection to the  family 
of finite posets
$\mathcal{P}_{[m]} = ([m]^*, \leq)$.  We  also give an analogue of Theorem
\ref{thm:S} for  $S(u;x_1, \ldots ,x_n)$. 
Our first result relates Wilf equivalence in $[m]^*$ to Wilf equivalences in $\mathbb{P}^*$ that are witnessed by rearrangement maps.

\begin{theorem}\label{thm:lift}  Suppose $u,v \in [m]^*$ for some positive integer
$m$.  Then $u \backsim_{[m]} v$ if and only if there exists a rearrangement map $f: \mathbb{P}^* \rightarrow \mathbb{P}^*$ that witnesses the Wilf equivalence $u \backsim v$.  
\end{theorem}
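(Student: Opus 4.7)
My plan is to prove both directions of the biconditional.

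For the \emph{if} direction, suppose $f:\P^*\to\P^*$ is a rearrangement map witnessing $u\backsim v$. Since both $f$ and $f^{-1}$ preserve the multiset of letters of a word, $f$ restricts to a bijection $[m]^*\to[m]^*$. Under this restriction the weight $W_{[m]}(w)$ is preserved (content is unchanged under rearrangement) and the condition $w\in\F(u)\iff f(w)\in\F(v)$ restricts to the corresponding statement inside $[m]^*$. Summing $W_{[m]}(w)$ over $\F(u)\cap[m]^*$ and applying the bijection then yields $F(u;x_1,\ldots,x_m)=F(v;x_1,\ldots,x_m)$, i.e., $u\backsim_{[m]}v$.

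For the \emph{only if} direction I would build $f$ one content class at a time. For a content vector $\mathbf{c}=(c_1,c_2,\ldots)$ of finite support, let $W_\mathbf{c}\subseteq\P^*$ be the (finite) set of words with that content; any rearrangement map must permute each $W_\mathbf{c}$. So it suffices to produce, for every $\mathbf{c}$, a bijection $f_\mathbf{c}:W_\mathbf{c}\to W_\mathbf{c}$ carrying $\F(u)\cap W_\mathbf{c}$ onto $\F(v)\cap W_\mathbf{c}$, and then glue these together to obtain the global $f$. Such $f_\mathbf{c}$ exists as soon as $|\F(u)\cap W_\mathbf{c}|=|\F(v)\cap W_\mathbf{c}|$, since in that case the complements in $W_\mathbf{c}$ automatically have equal size.

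The main step is therefore to verify this count identity for every $\mathbf{c}$, and the key tool is the following \emph{truncation observation}: because every letter of $u$ lies in $[m]$, a word $w\in\P^*$ embeds $u$ if and only if the truncation $\min(w,m):=\min(w_1,m)\min(w_2,m)\cdots$ embeds $u$, since within any candidate embedding the inequality $w_i\ge u_j$ (with $u_j\le m$) is equivalent to $\min(w_i,m)\ge u_j$; the same observation applies to $v$. For $\mathbf{c}=(c_1,\ldots,c_k)$ with $k\ge m$, every $w\in W_\mathbf{c}$ truncates to a word in $W_{\mathbf{c}'}$ with $\mathbf{c}'=(c_1,\ldots,c_{m-1},c_m+c_{m+1}+\cdots+c_k)$, and each $w'\in W_{\mathbf{c}'}$ has exactly $\binom{c_m+\cdots+c_k}{c_m,\ldots,c_k}$ preimages under truncation (obtained by redistributing the letters $m,m+1,\ldots,k$ among the positions of $w'$ equal to $m$). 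Hence
$$|\F(u)\cap W_\mathbf{c}| \;=\; \binom{c_m+\cdots+c_k}{c_m,\ldots,c_k}\,|\F(u)\cap W_{\mathbf{c}'}|,$$
and likewise for $v$. Since $\mathbf{c}'$ is supported in $[m]$, the hypothesis $u\backsim_{[m]}v$ gives equality of the two right-hand sides, completing the count identity. The main obstacle, as I see it, is setting up the truncation idea cleanly; once that is in place, the rest reduces to matching set sizes inside each content class and choosing any bijection realizing the match.
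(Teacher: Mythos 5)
Your proof is correct and takes essentially the same approach as the paper: both directions hinge on the observation that whether $w$ embeds $u\in[m]^*$ depends only on $w$ with its letters capped at $m$, which reduces the converse to matching counts (or bijections) inside $[m]^*$. The only difference is cosmetic — the paper explicitly lifts a content-preserving bijection $g:\F(u)\cap[m]^*\to\F(v)\cap[m]^*$ by reinserting the large letters into the positions of the $m$'s, whereas you verify the cardinality identity on each content class via a multinomial count and then choose an arbitrary bijection; both yield the same rearrangement map.
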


\begin{proof}  First note that if there is a rearrangement map $f: \mathbb{P}^* \rightarrow \mathbb{P}^*$ that witnesses the Wilf equivalence $u \backsim v$, then the restriction of $f$ to $[m]^*$ is a $W_{[m]}$-preserving bijection that shows $u \backsim_{[m]} v$.

For the converse, suppose $u, v \in [m]^*$ and $u \backsim_{[m]} v$, so that 
$F(u; x_1, \ldots , x_m) = F(v; x_1, \ldots , x_m)$.  Then 
 there is a $W_{[m]}$-preserving 
bijection $g:\F(u) \cap [m]^* \rightarrow \F(v) \cap [m]^*$.
So $g(w)$ is a rearrangement of $w$ for all $w$.  
This bijection 
can then be lifted to the desired rearrangement $f$, as follows.  Suppose
$w = w_1 \cdots w_n \in \P^*$ and $1 \leq i_1 < \cdots < i_l \leq n$ is 
the sequences of indices $i$ such that $w_i \geq m$.  Then let 
$\overline{w}$ be the word in $[m]^*$ that results by replacing each 
$w_{i_k}$ with $m$.  Then $u \leq w$ if, and only if, $u \leq \overline{w}$.  
Now apply $g$ to $\overline{w}$.  Then since $z = g(\overline{w})$ is a rearrangement
of $\overline{w}$, there is a sequence $1 \leq j_1 < \cdots < j_l \leq n$ 
consisting of all the indices $j$ such that $z_j = m$.   Then let $f(w)$ be 
the result of replacing $z_{j_k}$ by $w_{i_k}$ for $k = 1, \ldots, l$. 
\end{proof}

Theorem~\ref{thm:lift} shows that the question of whether
$u \backsim v$ implies a rearrangement witnessing the equivalence
can be answered by restricting to a finite alphabet.  We have computed $S(u;x_1,\ldots ,x_5)$ for all permutations in $S_n$ for $n \leq 5$ and indeed, if $u \backsim v$ in this case, then 
$S(u;x_1,\ldots ,x_5) = S(v;x_1,\ldots ,x_5)$. Thus the 
strong rearrangement conjecture holds for these words.

Next we consider an analogue of Theorem~\ref{thm:S} for the more refined 
generating functions $S(u;x_1, \ldots ,x_m)$.  It is still the case 
that   
\begin{equation}\label{eq:Sx2}
 \S(u) \cap [m]^*  = (\A(u) \cap [m]^*)(\mathcal{W}(u)\cap [m]^*)- 
\left(\bigcup_{i=1}^{n-1} (\S^{(i)}(u) \cap [m]^*)\right).
\end{equation}
It is easy to see that 
\begin{eqnarray}\label{eq:Sx3}
\sum_{w \in \A(u)\W(u) \cap [m]^*}W_{[m]}(w) &=& 
A(u;x_1,\ldots,x_m)\prod_{r=1}^n \sum_{s=u_i}^m x_j \nonumber \\
&=& \frac{1}{1-\sum_{i=1}^m x_i}(1-S(u;x_1, \ldots, x_m))\prod_{r=1}^n 
\sum_{s=u_r}^m x_s.
\end{eqnarray}
We also have that 
\begin{equation*}\label{eq:Sx4}
\S^{(i)}(u)\cap [m]^* = \bar{\S}^{(i)}(u)\W(s_i(u)) \cap [m]^*.
\end{equation*}
Thus if 
\begin{eqnarray*}
S^{(i)}(u,x_1, \ldots, x_m) &=& \sum_{w \in \S^{(i)}(u) \cap [m]^*} W_{[m]}(w) \ \mbox{and} \\
\bar{S}^{(i)}(u,x_1, \ldots, x_m) &=& \sum_{w \in \bar{\S}^{(i)}(u) \cap [m]^*} W_{[m]}(w),
\end{eqnarray*}
then we will have 
\begin{equation*}\label{eq:Sx5}
S^{(i)}(u,x_1, \ldots, x_m) = \bar{S}^{(i)}(u,x_1, \ldots, x_m)\prod_{r=i+1}^n 
\sum_{s=u_r}^m x_s.
\end{equation*}
The only step in our proof of Theorem \ref{thm:S} which does not 
have an analogue in this case is the fact that 
$$\bar{S}^{(i)}(u;t,x) = x^{d_i(u)} S(u;t,x).$$
It will no longer be the case that 
$\bar{S}^{(i)}(u;x_1, \ldots,x_m)$ is a multiple of $S(u;x_1, \ldots, x_m)$ if 
$d_i(u) > 0$.  However, if $d_i(u) =0$, then it will be the case 
that $\bar{S}^{(i)}(u) \cap [m]^* = \S^{(i)}(u) \cap [m]^*$ so that 
\begin{equation*}\label{eq:Sx6}
 \bar{S}^{(i)}(u,x_1, \ldots, x_m) =S(u,x_1, \ldots, x_m).
\end{equation*}
Thus if $d_i(u) =0$ for all $i =1, \ldots, n-1$, then 
we will have 
\begin{equation}\label{eq:Sx7}
S^{(i)}(u,x_1, \ldots, x_m) = S(u,x_1, \ldots, x_m)\sum_{r=i+1}^n 
\sum_{s=u_r}^m x_s
\end{equation} 
for all $i$.  However, it is easy to see that $d_i(u) =0$ for all $i =1, \ldots, n-1$ if and only if $u_1 \leq \cdots \leq u_n$. In that case, 
we can see from (\ref{eq:Sx2}),  (\ref{eq:Sx3}), and 
(\ref{eq:Sx7}) that 
\begin{eqnarray*}\label{eq:Sx8}
S(u,x_1, \ldots, x_m) &=& \frac{1}{1-\sum_{i=1}^m x_i}(1-S(u;x_1, \ldots, x_m))\sum_{r=1}^n \sum_{s=u_r}^m x_s \\
&&- \sum_{i=1}^{n-1}  S(u,x_1, \ldots, x_m)\sum_{r=i+1}^n 
\sum_{s=u_r}^m x_s.
\end{eqnarray*}
Solving for $S(u,x_1, \ldots, x_m)$ will then result in the following 
theorem.

\begin{theorem} \label{thm:Sfinite}
Suppose $u = u_1 \ldots u_n \in [m]^*$ is weakly increasing.  Then 
\begin{eqnarray*}
S(u;x_1, \ldots, x_m) & = & \frac{\prod_{i=1}^n\sum_{j=u_i}^m x_j}
{\left(1 +  \sum_{i=1}^{n-1}  \prod_{j=i+1}^n\sum_{l=u_j}^m x_l\right) 
( 1 -\sum_{i=1}^m x_i ) + \prod_{i=1}^n\sum_{j=u_i}^m x_j}.
\end{eqnarray*}
\end{theorem}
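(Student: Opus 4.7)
My plan is to simply assemble the ingredients that the discussion preceding the theorem has already produced, and then solve a single linear equation in $S(u; x_1, \ldots, x_m)$. The hypothesis that $u = u_1 \ldots u_n$ is weakly increasing is precisely the condition $d_i(u) = 0$ for all $i = 1, \ldots, n-1$, which is what triggers the clean multiplicative identity $\bar{\mathcal{S}}^{(i)}(u) \cap [m]^* = \mathcal{S}(u) \cap [m]^*$, and hence
$$
S^{(i)}(u; x_1, \ldots, x_m) = S(u; x_1, \ldots, x_m)\prod_{r=i+1}^n \sum_{s=u_r}^m x_s.
$$
So the approach is: take $W_{[m]}$-weights on both sides of the disjoint-union decomposition (\ref{eq:Sx2}), substitute (\ref{eq:Sx3}) for the weight of $\mathcal{A}(u)\mathcal{W}(u) \cap [m]^*$ and the displayed formula above for each $S^{(i)}$, and then solve for $S(u; x_1, \ldots, x_m)$.

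Concretely, I would abbreviate $S = S(u; x_1, \ldots, x_m)$ and $P_k = \prod_{r=k}^n \sum_{s=u_r}^m x_s$, with the convention $P_{n+1} = 1$, so that (\ref{eq:Sx2}), (\ref{eq:Sx3}), and (\ref{eq:Sx7}) combine to give
$$
S \;=\; \frac{P_1(1-S)}{1-\sum_{i=1}^m x_i} \;-\; S\sum_{i=1}^{n-1} P_{i+1}.
$$
Clearing the denominator $1 - \sum_i x_i$ and collecting all terms involving $S$ on the left yields
$$
S\,\Bigl[\bigl(1-\textstyle\sum_i x_i\bigr)\bigl(1 + \textstyle\sum_{i=1}^{n-1} P_{i+1}\bigr) + P_1\Bigr] = P_1,
$$
and dividing through gives exactly the formula in the statement once we unfold $P_1 = \prod_{i=1}^n \sum_{j=u_i}^m x_j$ and $P_{i+1} = \prod_{j=i+1}^n \sum_{l=u_j}^m x_l$.

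Since every nontrivial combinatorial step (the disjoint decomposition, the factorization of the weight of $\mathcal{A}(u)\mathcal{W}(u) \cap [m]^*$, and the crucial identification $\bar{\mathcal{S}}^{(i)} = \mathcal{S}$ valid precisely because $d_i(u) = 0$) has already been carried out in the paragraphs preceding the theorem, there is no real obstacle; the only thing to check carefully is that the algebraic rearrangement produces the denominator in exactly the symmetric form displayed. The one place where I would pause to verify is the boundary of the summation: the $i = n$ term corresponds to appending the length-$n$ embedding of $u$ directly to a word in $\mathcal{A}(u)$ (with no overlap correction), which is why the sum over $i$ in the denominator runs from $1$ to $n-1$ while the $+P_1$ outside comes from the $1 \cdot P_1$ contribution absorbed from the $(1-S)P_1/(1-\sum x_i)$ term.
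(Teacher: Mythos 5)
Your proposal is correct and follows essentially the same route as the paper, which derives the identity $S = \frac{P_1(1-S)}{1-\sum_i x_i} - S\sum_{i=1}^{n-1}P_{i+1}$ from (\ref{eq:Sx2}), (\ref{eq:Sx3}), and (\ref{eq:Sx7}) in the paragraphs preceding the theorem and then states that solving for $S(u;x_1,\ldots,x_m)$ yields the result. Your explicit algebraic rearrangement and your observation that weak monotonicity of $u$ is equivalent to $d_i(u)=0$ for all $i$ match the paper's argument exactly.
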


\section{Further work} \label{sec:conclusion}

Many of the ideas in this paper can be extended to generalized factor order on $\mathbb{P}^*$ with 
other partial orders.  In particular, in \cite{LLR} we consider the mod $k$ partial order on $\mathbb{P}^*$ defined by setting $m \leq_k n$ if $m \leq n$ and $m = n \text{ mod } k$.  For example, the Hasse diagram for the mod 3 partial order consists of the three chains
\begin{eqnarray*}
&&1 \leq_3 4 \leq _3 7 \leq_3 \cdots, \\
&&2 \leq_3 5 \leq _3 8 \leq_3 \cdots, \text{ and}\\
&&3 \leq_3 6 \leq _3 9 \leq_3 \cdots.\
\end{eqnarray*}
An generalization of Theorem~\ref{thm:S} in this context applies to a rich class of words that generalizes the set of words in $\mathbb{P}^*$ with increasing/decreasing factorizations.  One interesting  result of this theorem is that the rearrangement conjectures do not hold in general for the mod $k$ partial order with $k \geq 2$, however we can identify those words for which we believe the rearrangement conjectures do hold.  We refer the reader to \cite{LLR} for details.

\bigskip
\hrule
\bigskip



\noindent Referenced sequences:
\seqnum{A000045},
\seqnum{A000071}
\seqnum{A000073}, 
\seqnum{A000078},
\seqnum{A000124},
\seqnum{A000126},
\seqnum{A000292},
\seqnum{A001591},
\seqnum{A001949},
\seqnum{A007800},
\seqnum{A008466},
\seqnum{A008937},
\seqnum{A014162},
\seqnum{A050231},
\seqnum{A050232},
\seqnum{A050233},
\seqnum{A107066},
\seqnum{A145112},
\seqnum{A145113},
\seqnum{A172119}.

 \end{document}